\documentclass[12pt,a4paper]{article}

%%%%%%%%%%% MACRO %%%%%%%%%%
\usepackage{theorem,enumitem}
\usepackage{amsmath,latexsym,amssymb,amsfonts,epsfig}
\usepackage{eucal}
\usepackage{mathrsfs}
\usepackage{array}
\usepackage{epstopdf} 
\usepackage{%xcolor,
graphicx}
%\usepackage{amsfont} 
%\usepackage{footmisc}

%%%%%Counter%%%%%
\newcounter{Scounter}
\setcounter{Scounter}{1}

%%%%%%%%%%% Theorem %%%%%%%%%%
\theorembodyfont{\normalfont\slshape}
\newtheorem{thm}{Theorem}

\newtheorem{cor}[thm]{Corollary}
\newtheorem{prop}[thm]{Proposition}
\newtheorem{definition}[thm]{Definition}
\newtheorem{lemma}[thm]{Lemma}
 
\newtheorem{observation}[thm]{Observation} 

\newtheorem{claim}{Claim}

\newtheorem{con}[thm]{Conjecture}

%\renewcommand{\theprob}{}

%%%%%orientation%%%%%%

% \addtolength{\oddsidemargin}{-1.3cm}
% \addtolength{\textheight}{3.4cm}
% \addtolength{\textwidth}{2.6cm}
% \addtolength{\topmargin}{-2.0cm}
% \renewcommand{\theenumi}{\arabic{enumi}}
% \renewcommand{\labelenumi}{\upshape{(\theenumi)}}

%%%%%%% Tomas %%%%%%%%%%%%%

\newcommand\sep{\mathcal{S}_{2,3}}
\newcommand\subc{\mathcal{G}_{2,3}}
\newcommand\sepf{\mathcal{S}_{2,3}^f}
\newcommand\size[1]{|#1|}
\newcommand\Setx[1]{\{#1\}}

\newcommand\ttedge{$(2,3)$-edge }
\newenvironment{xcase}[2]{\medskip\par\noindent\textbf{Case #1: #2. }}{\par\medskip}
\renewenvironment{proof}{\emph{Proof. }}{\hfill$\Box$\par\smallskip}
\newenvironment{claimproof}{}{\hfill$\triangle$\par\smallskip}
\newcommand{\fig}[1]{\includegraphics[page=#1]{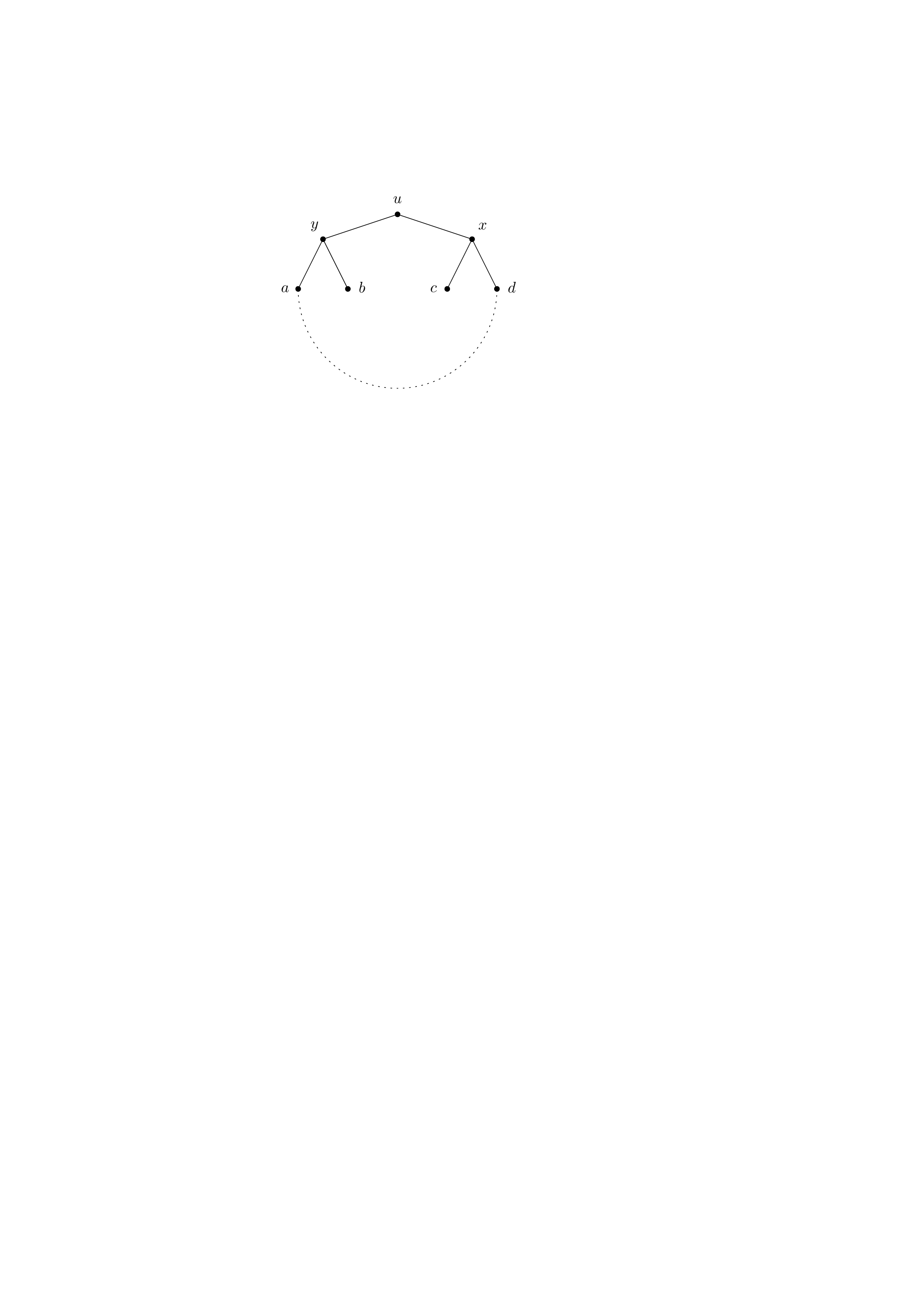}}
\newcommand{\degree}[2]{d_{#1}(#2)}
\newcommand{\hf}{\hspace*{0mm}\hfill\hspace*{0mm}}

%%%%%%%%%%%%%%%%%%%% End of MACRO %%%%%%%%%%%%%%%%%%%%%
% \renewcommand{\baselinestretch}{1.05}

\bfseries\normalfont

\makeatletter
\def\thanks#1{%
   \footnotemark
   \edef\@tempa{\noexpand\noexpand\noexpand\footnotetext[\the\c@footnote]}%
   \toks@\expandafter{\@thanks}%
   \toks\tw@{{#1}}
   \xdef\@thanks{\the\toks@\@tempa\the\toks\tw@}}
\makeatother

\begin{document}

\title{\textbf{Decomposing planar cubic graphs}}

\author{
Arthur Hoffmann-Ostenhof\thanks{Technical University of Vienna,
Austria.
Email: {\tt arthurzorroo@gmx.at}}\thanks{This work was supported by the Austrian Science Fund (FWF): P 26686.}\and
Tom\'{a}\v{s} Kaiser\thanks{Department of Mathematics, Institute for
  Theoretical Computer Science (CE-ITI), and European Centre of
  Excellence NTIS (New Technologies for the Information Society),
  University of West Bohemia, Pilsen, Czech Republic. Email:
  \texttt{kaisert@kma.zcu.cz}}\thanks{Supported by project
  GA14-19503S of the Czech Science Foundation.}\and
Kenta Ozeki\thanks{Faculty of Environment and Information Sciences,
Yokohama National University,
%79-7 Tokiwadai, Hodogaya-ku, Yokohama, 240-8501
Japan. 
e-mail: {\tt ozeki-kenta-xr@ynu.ac.jp}
}\thanks{ This work was supported by JST ERATO Grant Number JPMJER1201, Japan.}} 
\date{}
\maketitle

\begin{abstract}
  The 3-Decomposition Conjecture states that every connected cubic
  graph can be decomposed into a spanning tree, a $2$-regular subgraph 
 and a matching. 
We show that this conjecture holds for the
  class of connected plane cubic graphs.
\end{abstract}

\noindent
{\bf Keywords:} 
cubic graph, 3-regular graph, spanning tree, decomposition, separating cycle

\section{Introduction}
\label{sec:intro}

All graphs considered here are finite and without loops. A
\emph{decomposition} of a graph $G$ is a set of subgraphs whose edge
sets partition the edge set of $G$. Any of these subgraphs may equal
the empty graph --- that is, a graph whose vertex set is empty ---
unless this is excluded by additional requirements (such as being a
spanning tree). We regard matchings in decompositions as
$1$-regular subgraphs.

The \textit{3-Decomposition
  Conjecture} (3DC) by the first author \cite{C, Hof1} states that
every connected cubic graph has a decomposition into a spanning tree,
a 
$2$-regular subgraph and a matching. 
For an example, see the
graph on the left in Figure~\ref{fig:decomp}. The
$2$-regular subgraph in such a decomposition is necessarily
nonempty whereas the matching can be empty.

The 3DC was proved for planar and projective-planar 3-edge-connected cubic graphs in \cite{Oz}. It is also known that the conjecture holds for all hamiltonian cubic graphs. For a survey on the 3DC, see \cite{Hof2}.

We call a cycle $C$ in a connected graph $G$ \emph{separating}
if $G-E(C)$ is disconnected. The 3DC was shown in \cite{Hof2} to be
equivalent to the following conjecture, called the
\textit{2-Decomposition Conjecture} (2DC).
(See Proposition \ref{p:2D3D} at the end of this paper.)
 
\begin{con}[2DC]
  Let $G$ be a connected graph with vertices of degree two and three
  only such that every cycle of $G$ is separating. Then $G$ can be
  decomposed into a spanning tree and a nonempty matching.
\end{con}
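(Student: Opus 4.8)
The plan is to recast the decomposition as the existence of a single spanning tree with a leaf restriction, and then to build that tree by exploiting the separating hypothesis.

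First I would reformulate the statement. Writing $n_i$ for the number of vertices of degree $i$, a double edge count gives $|E(G)| = n_2 + \tfrac32 n_3$, so that the cyclomatic number of the connected graph $G$ is $|E(G)| - |V(G)| + 1 = \tfrac12 n_3 + 1 \ge 1$. Hence the cotree $E(G)\setminus E(T)$ of any spanning tree $T$ has exactly $\tfrac12 n_3 + 1$ edges, and a spanning tree whose cotree is a matching automatically produces a \emph{nonempty} matching. So it suffices to find a spanning tree $T$ whose cotree is a matching. Now the cotree is a matching exactly when every vertex is incident to at most one cotree edge. A degree-$2$ vertex is incident to at most one cotree edge automatically, since being incident to two would isolate it in $T$; thus the matching condition can fail only at a degree-$3$ vertex incident to two cotree edges, that is, at a degree-$3$ vertex that is a leaf of $T$. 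The assertion is therefore equivalent to the clean statement: \emph{$G$ has a spanning tree in which every vertex of degree $3$ is internal} (equivalently, all leaves lie among the degree-$2$ vertices).

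Next I would pin down how the hypothesis enters. If a cycle $C$ runs through a degree-$2$ vertex $v$, then both edges at $v$ lie on $C$, so $v$ is isolated in $G - E(C)$ and $C$ is separating for free. Thus the separating hypothesis says nothing about cycles meeting a degree-$2$ vertex and constrains only the cycles lying entirely in the subgraph induced by the degree-$3$ vertices. I would then argue by induction on $|E(G)|$. Bridges belong to every spanning tree and split $G$ into independent pieces, so I may assume $G$ is $2$-edge-connected; a graph that is a single cycle is the base case, its spanning tree being a path. For the inductive step I would take a separating cycle $C$ whose witnessing partition $(D, V(G)\setminus D)$ has $D$ inclusion-minimal; then every edge leaving $D$ lies on $C$, and there are at least two such edges because a cycle crosses any cut an even number of times. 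Splitting along $C$, solving the minimal region $D$ and the contraction of $D$ separately, and then recombining the two spanning trees across the cut --- putting exactly one crossing edge into the tree and the remaining crossing edges into the matching --- yields the tree for $G$, provided the pieces still lie in the degree-$2$/degree-$3$ class and still have all cycles separating. To stay inside this class I would suppress degree-$2$ vertices rather than delete edges, and I would prefer to place the designated cotree edges at degree-$2$ vertices of $C$, since such an edge can be removed while keeping the graph connected and leaves that vertex as an admissible leaf.

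The main obstacle is global consistency of these choices. Locally the separating condition always furnishes a crossing edge whose removal preserves connectivity, but two crossing edges selected for the matching --- either from one cycle or from different cycles --- may share a degree-$3$ endpoint, namely a vertex of $C$ whose two neighbours on $C$ fall on opposite sides of the cut; such a vertex would be forced into leaf status and the matching property would break. What makes success plausible is that separating cycles organise the edge cuts of $G$ into a laminar, tree-like family, so the cycles interact in a nested fashion and the deletions can in principle be routed to distinct vertices. Converting this into a proof is precisely the hard part: in a plane graph the separating cycles are witnessed by a nested system of face boundaries, which governs the interaction and allows the selection to be carried out region by region, whereas in general one must replace planarity by a purely combinatorial analysis of the laminar cut structure. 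Guaranteeing a simultaneous, matching-compatible choice of cotree edges across all cycles is the step that carries the full weight of the statement, and it is where I would expect the real work to lie.
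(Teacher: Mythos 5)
The statement you are proving is the 2-Decomposition Conjecture itself, and in the paper it is exactly that --- a \emph{conjecture}. The paper does not prove it in general; it proves only its planar case (Theorem~\ref{main}) together with the equivalence of the 2DC and the 3DC (Proposition~\ref{p:2D3D}). Your proposal, by your own admission in its final paragraph, does not prove it either. What you do establish is sound and useful: the reformulation that a $2$-decomposition exists if and only if $G$ has a spanning tree in which no $3$-vertex is a leaf is correct, and the count $|E(G)|-|V(G)|+1=\tfrac12 n_3+1\ge 1$ correctly shows the matching is automatically nonempty. The induction scheme (reduce to the $2$-edge-connected case, split along a separating cycle with an inclusion-minimal side, recombine) is also a reasonable skeleton. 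But the decisive step --- choosing, consistently across all recursive splits, which crossing edges go to the matching so that no two of them share a $3$-vertex --- is precisely what you leave open, and it is not a routine verification: it is where the entire difficulty of the conjecture lives. Moreover, the structural claim you lean on to make this plausible, that the cuts witnessed by separating cycles form a laminar family, is unsubstantiated; cuts arising from different separating cycles need not be nested, and without some such structure the recombination can force a $3$-vertex of the splitting cycle, with neighbours on both sides of the cut, into leaf status --- the exact failure you identify.

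It is instructive to compare with how much machinery the paper needs even in the planar case, where the nesting you hope for is genuinely supplied by the embedding: Theorem~\ref{t:induction} is an induction over plane graphs in $\sepf$ carrying a prescribed edge set $B$ that must go into the forest part, and its side conditions (a), (b1)--(b4) are delicately tuned to make the two sides of a cyclic $2$-edge-cut recombine compatibly; Figure~\ref{fig:counter} shows these conditions cannot be relaxed. Nothing in your outline substitutes for that apparatus in the nonplanar setting. Two smaller points: after deleting a bridge, the components acquire degree-$1$ vertices, so your induction must run in the class of subcubic graphs (as the paper's Theorem~\ref{main} does) rather than in the degree-$\{2,3\}$ class of the conjecture's statement; and ``solving the contraction of $D$'' requires checking that the contracted graph stays in the class and that all its cycles remain separating, which you do not verify. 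In summary: your reduction is correct, but the proposal has a genuine, self-acknowledged gap at the key step, and that gap is the open conjecture itself.
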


For an example, see the graph on the right in Figure~\ref{fig:decomp}.
The main result of this paper, Theorem~\ref{main}, shows that the 2DC
is true in the planar case. Call a graph \emph{subcubic} if its
maximum degree is at most $3$.

\begin{figure}[htpb]
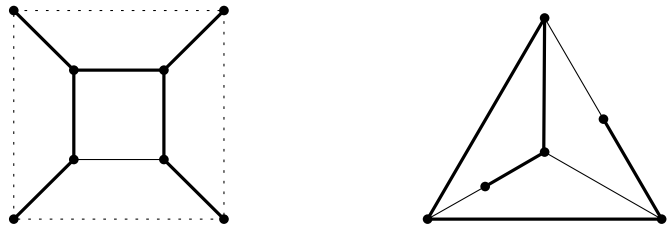
 
% \centering\epsfig{file=3dc-exampels-fat.eps,width=3.5in}
  \hf\fig5\hf\fig6\hf
\caption{Decomposition of a cubic and a subcubic graph
into a spanning tree (thick lines),
a $2$-regular subgraph (dotted lines),
and a nonempty matching (thin lines). }
\label{fig:decomp}
\end{figure}

\begin{thm}\label{main}
  Every connected subcubic plane graph in which every cycle is
  separating has a decomposition into a spanning tree and a matching.
\end{thm}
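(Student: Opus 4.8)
The plan is to produce the decomposition in its complementary form: find a matching $M\subseteq E(G)$ such that $G-M$ is a spanning tree. Since $G-M$ has the same vertex set as $G$, it is a spanning tree exactly when it is connected and $|E(G)|-|M|=|V(G)|-1$; thus $M$ must be a cotree that happens to be a matching, equivalently a matching meeting every cycle of $G$ whose deletion keeps $G$ connected. Euler's formula gives $|E(G)|-|V(G)|+1$ bounded faces, which is precisely the required size of $M$, so morally one must ``spend'' exactly one matching edge to destroy each bounded face. I would run the whole argument by induction on $|E(G)|$ (equivalently on the number of bounded faces), taking trees, for which $M=\emptyset$ works, and single cycles, for which any one edge is the matching, as base cases.

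First I would dispose of low connectivity. If $G$ has a vertex of degree at most $1$ or, more generally, a bridge $e=uv$, then $G-e$ splits into components $G_1\ni u$ and $G_2\ni v$. No cycle uses a bridge, so every cycle of $G_i$ is a cycle of $G$; if such a cycle were non-separating in $G_i$ then, because $e$ still joins the two sides, it would be non-separating in $G$, contradicting the hypothesis. Hence each $G_i$ is again a connected subcubic plane graph all of whose cycles separate, and induction yields $G_i=T_i\cup M_i$. Putting $T=T_1\cup T_2\cup\{e\}$ and $M=M_1\cup M_2$ solves $G$, since $M_1,M_2$ have disjoint supports and $e\in T$. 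A short argument then shows that a bridgeless subcubic graph has no cutvertex (a degree-$2$ cutvertex would force a bridge, and a degree-$3$ cutvertex splits its edges $2{+}1$, again forcing a bridge), so I may assume $G$ is $2$-connected; then every vertex has degree $2$ or $3$ and every face is bounded by a cycle.

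The engine of the induction is the structural fact that a $2$-connected subcubic plane graph in which every cycle is separating, and which is not itself a single cycle, must contain a vertex of degree $2$. Indeed, if it were cubic then, since a cycle visits only two of the three edges at each vertex, no facial cycle could contain the star-cut of a vertex, so a separating facial cycle would have to contain a nontrivial edge-cut lying entirely on one face boundary; chasing this through (every boundary vertex of a face retains a ``leg'' to the exterior after the face is deleted) one exhibits a facial cycle that is non-separating, a contradiction. Granted a degree-$2$ vertex $v$ with neighbours $a,b$, the reduction suppresses $v$, replacing the path $a\,v\,b$ by a single edge $ab$ to obtain a smaller graph $G'$. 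The decomposition lifts cleanly: if $ab$ lies in the tree of $G'$ I put both $av,vb$ in $T$; if $ab$ lies in the matching of $G'$ I put one of $av,vb$ in $M$ and the other in $T$. In either case $v$ is spanned, no vertex receives a second matching edge, and $G-M$ remains a spanning tree.

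The hard part is verifying that the reduced graph $G'$ still has the property that every cycle is separating, and here planarity is indispensable. Suppression is harmless as long as $a$ and $b$ were non-adjacent and no short face is destroyed; the dangerous case is when $a,b$ are already joined (so $v$ sits on a triangular face), or more generally when $v$ lies on a face so short that suppressing it collapses a cycle whose only reason for separating $G$ was the isolation of $v$. In such a configuration the new parallel edge, or the shortened cycle, may fail to separate $G'$, so the naive induction breaks. I would handle these degenerate configurations by a direct, planarity-driven local analysis: on such a short face one commits the ``one matching edge'' for that face explicitly (for instance placing the chord $ab$ into $M$ and $av,vb$ into $T$) and deletes or contracts the appropriate edges, then checks face by face that the surviving cycles still separate. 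Assembling these local rules into a complete, non-overlapping list of reducible configurations, and proving that one of them always applies, is the technical core of the theorem; everything else is the bookkeeping of the base cases and the connectivity reductions above.
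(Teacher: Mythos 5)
Your outline stops exactly where the proof actually begins, and it misdiagnoses where the danger lies. The reduction you propose (suppress a $2$-vertex $v$, replacing $a\,v\,b$ by $ab$) fails to stay inside the class of graphs with all cycles separating not only when $a,b$ are adjacent or the face at $v$ is short: it fails whenever $v$ is the \emph{only} $2$-vertex on one of its incident faces, however long that face is, because in a cyclically $3$-edge-connected plane graph a facial cycle with no $2$-vertex is non-separating (this is Lemma~\ref{l:face-two} of the paper). So the ``degenerate configurations'' you defer to a case analysis are not a finite list of local pictures; they are a global condition on the distribution of $2$-vertices over faces, and the paper has to carry this as an invariant (the class $\sepf$ and condition~\ref{i:another}) throughout the induction. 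Your sentence ``assembling these local rules into a complete, non-overlapping list of reducible configurations \dots is the technical core'' concedes that the core is missing; what you have before that point is correct but routine (the bridge reduction and the lifting of a $2$-decomposition through a suppressed $2$-vertex both work, and the paper does essentially the same for bridges).

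There is a second, structural gap: your plan to ``commit the one matching edge for that face explicitly'' and then recurse requires the smaller instance to respect that commitment, i.e.\ you need an induction hypothesis that lets you prescribe which edges must land in the tree part. The bare statement of Theorem~\ref{main} cannot serve as its own induction hypothesis here; the paper strengthens it to the existence of a $B$-2D (a forest-plus-matching decomposition whose forest contains a prescribed set $B$ of boundary edges), and Figure~\ref{fig:counter} shows that even this fails unless $B$ is tightly constrained ($\size{B(2,3)}\le 1$ in the $3$-connected case, plus the conditions \ref{i:sep}--\ref{i:two} when there is a cyclic $2$-edge-cut). Separately, your claim that a $2$-connected subcubic plane graph in $\sep$ other than a cycle must have a $2$-vertex is asserted with only a hand-wave (``chasing this through''); it is true, but note that the cheap argument is a counting one --- a cubic graph with a spanning-tree-plus-matching decomposition would need a matching of size $|V|/2+1$ --- and in any case the paper never needs the claim in this form because it works inside $\sepf$ from the start. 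As written, the proposal is a plausible strategy statement, not a proof.
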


Note that the matching in Theorem \ref{main} is empty if and only if the
subcubic graph is a tree.
%Since a tree has a vertex of degree $1$, 
It follows that the 2DC holds for the planar case. Finally,
we will prove that Theorem \ref{main} implies the planar case of the 3DC:
% that the 3DC holds for the planar case, see
%Corollary \ref{imply2}.

\begin{cor}\label{imply2}
 Every connected cubic plane graph can be decomposed into a
  spanning tree, a nonempty $2$-regular subgraph and a matching.
\end{cor}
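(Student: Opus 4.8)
The plan is to deduce the corollary from Theorem~\ref{main} by stripping off a nonempty $2$-regular subgraph and then decomposing what remains. Concretely, given a connected cubic plane graph $G$, I would search for a nonempty $2$-regular subgraph $R$ (a vertex-disjoint union of cycles) such that $H := G - E(R)$ is connected and every cycle of $H$ is separating. Since $G$ is cubic, each vertex lying on $R$ satisfies $d_H(v)=1$, while every other vertex satisfies $d_H(v)=3$; in particular $H$ is subcubic and, as a subgraph of the plane graph $G$, inherits a plane embedding. Theorem~\ref{main} then decomposes $H$ into a spanning tree $T$ and a matching $M$. Because $V(H)=V(G)$, the tree $T$ spans $G$; and since $E(G)$ is the disjoint union of $E(R)$, $E(T)$ and $E(M)$, the triple $(T,R,M)$ is a decomposition of $G$ into a spanning tree, a nonempty $2$-regular subgraph, and a (possibly empty) matching. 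The degree bookkeeping is automatically consistent: vertices on $R$ become leaves of $T$ and avoid $M$, whereas vertices off $R$ split their three edges between $T$ and $M$.

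The whole difficulty is thus concentrated in producing a suitable $R$. First I would dispose of low connectivity: a bridge of $G$ lies on no cycle, so it can be placed into the spanning tree, reducing to the bridgeless case (for cubic graphs, bridgeless is equivalent to $2$-connected), and a $2$-edge-cut lets one split $G$ into smaller instances and glue the decompositions along the cut edges. In the $2$-connected case every face of $G$ is bounded by a cycle, and the natural first candidate is to take $R$ to be the boundary cycle $C$ of a single face $F$. A facial cycle of a $2$-connected cubic plane graph is \emph{non}-separating --- after deleting $E(C)$ each vertex of $F$ stays attached, through its third edge, to the remainder --- so $H = G - E(C)$ remains connected, which is one of the two properties we need. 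The effect of the deletion is to turn each vertex of $F$ into a leaf of $H$, and these new pendants are what force other cycles to become separating, exactly as the two ``parallel'' quadrilaterals behave when one removes a face of the cube.

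The main obstacle is the second requirement: showing that \emph{every} cycle of $H$ is separating. Deleting a single facial cycle need not suffice, because a face of $G$ far from $F$ can survive in $H$ as a facial cycle whose vertices all reconnect on one side, hence as a non-separating cycle; the dodecahedron already illustrates this failure. I expect this to be the heart of the argument, and the place where planarity is used essentially. The remedy I would pursue is to choose $R$ not as one facial cycle but as a $2$-regular subgraph meeting the face structure globally --- for instance a union of vertex-disjoint facial cycles, selected via the weak (inner) dual so that, after deletion, $H$ becomes a ``tree of cycles with pendants'': a connected plane graph in which no face is bounded by a simple cycle, equivalently one in which deleting the edges of any cycle disconnects the graph (because a vertex is enclosed or a leaf is cut off). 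Verifying that such an $R$ can always be chosen while keeping $H$ connected is the crux; once it is in hand, Theorem~\ref{main} finishes the proof. This argument is precisely the planar, connectivity-preserving instance of the equivalence between the $2$DC and the $3$DC recorded in Proposition~\ref{p:2D3D}.
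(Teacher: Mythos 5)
Your reduction is the right one --- and it is exactly the paper's --- but you stop at the decisive point. You yourself write that ``verifying that such an $R$ can always be chosen while keeping $H$ connected is the crux,'' and your proposal never closes that gap: the route via single facial cycles demonstrably fails (as you note with the dodecahedron), and the repair you sketch --- a union of facial cycles selected through the weak dual so that the remainder becomes a ``tree of cycles with pendants'' --- is neither carried out nor obviously feasible. As it stands, the proposal is an accurate statement of what needs to be proved rather than a proof.

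The missing idea is an extremal one, and it makes all of your structural machinery (bridge and $2$-edge-cut reductions, facial cycles, the inner dual) unnecessary. Take a \emph{maximal} collection $\Setx{C_1,\dots,C_k}$ of pairwise disjoint cycles of $G$ such that $G' := G-\bigcup_{i=1}^k E(C_i)$ is connected. Every vertex of some $C_i$ has degree at most $1$ in $G'$, so no cycle of $G'$ meets any $C_i$; hence if $G'$ had a non-separating cycle $D$, then $\Setx{C_1,\dots,C_k,D}$ would be a strictly larger admissible collection, contradicting maximality. Thus every cycle of $G'$ is separating, Theorem~\ref{main} decomposes $G'$ into a spanning tree and a matching, and adding back $\bigcup_i C_i$ gives the decomposition. (This works for subcubic graphs too, which is how the paper states its Corollary~\ref{imply}.) Finally, you should also justify nonemptiness of the $2$-regular part in the cubic case, which you take for granted by assuming $R\neq\emptyset$ at the outset: it follows from an edge count, since a spanning tree and a matching together have at most $(n-1)+n/2 < 3n/2 = |E(G)|$ edges.
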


%%%%%%%%%%%%%%%%%%%%%%%%%%%%%%%%%%%%%%%%%%%%%%%%%%%%%%%%%%%%%%%%%%%%%%

\section{Preliminary observations}

Before we establish some facts needed for the proof of
Theorem~\ref{main}, we introduce some terminology and notation. We
refer to \cite{Bo, West} for additional information.

A cycle is a connected $2$-regular graph. Moreover, a $2$-cycle is a
cycle with precisely two edges. A $vw$-path is a path with endvertices 
$v$ and $w$.  For $k\in\Setx{2,3}$, a
\emph{$k$-vertex} of a graph $G$ is a vertex of degree $k$. Similarly,
for $k,\ell\in\Setx{2,3}$, a \emph{$(k,\ell)$-edge} is one with
endvertices of degrees $k$ and $\ell$.
We let $V_2(G)$ and $V_3(G)$ denote
the set of vertices of degree $2$ and $3$, respectively.

\begin{definition}
Let $\subc$ be the class of all connected plane graphs with each
vertex of degree $2$ or $3$. Let $\sep$ be the class of all graphs $G$
in $\subc$, such that each cycle in $G$ is separating.
\end{definition}

%Let $G$ be a plane graph in $\subc$. 
%The set of vertices and edges of
%$G$ is denoted by $V(G)$ and $E(G)$, respectively. 

If a vertex $v$ of $G$ belongs to the boundary of a face $F$, we say
that $v$ is \emph{incident} with $F$ or simply that it is a vertex of
$F$.  If $A$ is a set of edges of $G$ and $e$ is an edge, we
abbreviate $A\cup\Setx{e}$ to $A+e$ and $A\setminus\Setx{e}$ to $A-e$.

When contracting an edge, any resulting parallel edges are
retained. The contraction of a parallel edge is not allowed. 
\emph{Suppressing} a $2$-vertex (with two different neighbours) means
contracting one of its incident edges. If $e \in E(G)$, then 
$G/e$ denotes the graph obtained from $G$ by contracting $e$.

%Suppose that we contract an edge (say, $uv$) of a graph $G$, obtaining
%a graph $G'$. Each vertex of $G'$, other than the one resulting from
%the contraction, is (identified with) a vertex of $G$. In some cases,
%we extend the correspondence to the whole of $G'$ by saying that we
%contract $uv$ \emph{into} $u$ (in which case $u$ is considered to be a
%vertex of $G'$ and $v$ disappears in the contraction).

The graph with two vertices and three edges joining them is denoted by
$\Theta$.

Recall that an \emph{edge-cut} $C$ in a connected graph $G$ is an
inclusionwise minimal set of edges whose removal disconnects $G$. By
the minimality, $G-C$ has exactly two components. The edge-cut $C$ is
\emph{cyclic} if both components of $G-C$ contain cycles. The graph
$G$ is said to be \emph{cyclically $k$-edge-connected} (where $k$ is a
positive integer) if it contains no cyclic edge-cuts of size less than
$k$. Note that cycles, trees and subdivisions of $\Theta$ or of $K_4$
are cyclically $k$-edge-connected for every $k$.

In this paper, the end of a proof is marked by $\Box$, and the end of the
proof of a claim (within a more complicated proof) is marked by
$\triangle$.

The following lemma is a useful sufficient condition for a
$2$-edge-cut to be cyclic:
\begin{lemma}\label{l:cyclic}
  Let $C$ be a $2$-edge-cut in a $2$-edge-connected graph
  $G\in\subc$. If no component of $G-C$ is a path, then $C$ is a
  cyclic edge-cut.
\end{lemma}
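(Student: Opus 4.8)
The plan is to argue by contraposition: I will assume that $C$ is \emph{not} a cyclic edge-cut and deduce that some component of $G-C$ is a path. Since $C$ is a $2$-edge-cut, minimality forces $G-C$ to have exactly two components, say $H_1$ and $H_2$, and each of the two edges of $C$ to have exactly one endpoint in $H_1$ and one in $H_2$ (an edge of $C$ with both ends in the same component could be deleted from $C$, contradicting minimality). If $C$ is not cyclic, then one of the components, say $H_1$, contains no cycle; being connected and acyclic, $H_1$ is a tree. The goal is then to show that this tree must in fact be a path.

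The key step is a degree count. Every vertex $v$ of $G$ has degree $2$ or $3$, so $\deg_{H_1}(v)=\deg_G(v)-c(v)$, where $c(v)$ is the number of edges of $C$ incident with $v$; in particular $\deg_{H_1}(v)\ge 2-c(v)$. Hence a vertex of $H_1$ incident with no edge of $C$ has degree at least $2$ in $H_1$, and therefore cannot be a leaf of the tree $H_1$. Consequently every leaf of $H_1$ is incident with an edge of $C$. But $C$ contributes only two endpoints to $H_1$ (one per edge, and possibly coinciding), so $H_1$ has at most two leaves.

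Finally I would invoke the elementary fact that a tree with at most two leaves is a path: a tree on a single vertex is the trivial path, and any tree with at least two vertices has at least two leaves, so here it has exactly two, forcing every internal vertex to have degree $2$ and the tree to be a path. Thus $H_1$ is a path, which is precisely what the contrapositive requires: if no component of $G-C$ is a path, then neither $H_1$ nor $H_2$ can be acyclic, both contain cycles, and $C$ is cyclic.

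The one point to handle with some care — more a matter of completeness than a genuine obstacle — is the bookkeeping of the degenerate subcases: a single-vertex component, and the situation in which both edges of $C$ meet $H_1$ in a common vertex (which forces $H_1$ to be exactly that single vertex). Both are subsumed by the leaf count, and the only structural fact invoked is that a tree with at most two leaves is a path. It is worth noting that the entire leaf argument uses only the minimum degree $2$ guaranteed by membership in $\subc$.
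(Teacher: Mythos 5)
Your proof is correct and follows essentially the same route as the paper's: both arguments rest on the observation that, since every vertex of $G$ has degree at least $2$, any leaf of an acyclic component of $G-C$ must be an endvertex of an edge of $C$, so such a component has at most two leaves and is therefore a path. The only cosmetic differences are that you argue by contraposition where the paper argues by contradiction, and that you handle the case of coinciding endvertices directly, whereas the paper first notes that $C$ is a matching.
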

\begin{proof}
  Let $K$ be a component of $G-C$ and let $u$ and $v$ be the
  endvertices of the edges of $C$ in $K$. Note that since $G$ is
  subcubic and $2$-edge-connected, $C$ is a matching and thus $u\neq v$. Suppose
  that $K$ is acyclic. Since it is not a path, it is a tree with at
  least $3$ leaves, one of which is different from $u,v$ and so its
  degree in $G$ is $1$. Since $G\in\subc$, this is
  impossible. Consequently, each component of $G-C$ contains a cycle
  and $C$ is cyclic.
\end{proof}

\begin{lemma}\label{l:bridge-parallel}
  Every cyclically $3$-edge-connected graph $G \in \subc$ is
  bridgeless. Furthermore, $G$ contains no pair of parallel edges
  unless $G$ is a $2$-cycle or a subdivision of $\Theta$.
\end{lemma}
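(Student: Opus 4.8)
The plan is to derive both claims from the degree restriction (every vertex of $G$ has degree $2$ or $3$) together with the absence of small \emph{cyclic} edge-cuts; planarity will play no role. I would first prove bridgelessness directly. Suppose for contradiction that $G$ has a bridge $e$, so $G-e$ has two components $K$ and $K'$. The crucial observation is that $e$ is then a \emph{cyclic} edge-cut, contradicting cyclic $3$-edge-connectivity. To see this, consider the component $K$ containing an endvertex $u$ of $e$: every vertex of $K$ keeps its $G$-degree except $u$, whose degree drops by one to $1$ or $2$. Hence $u$ is the only vertex of $K$ that can have degree $1$, so $K$ has at most one leaf; since every tree on at least two vertices has at least two leaves, if $K$ were acyclic it would consist of the single vertex $u$, giving $\dg{G}{u}=1$ and contradicting $G\in\subc$. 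Thus $K$, and by symmetry $K'$, contains a cycle, making $\{e\}$ a cyclic $1$-edge-cut. In particular $G$ is $2$-edge-connected, which is what allows me to invoke Lemma~\ref{l:cyclic} afterwards.

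For the second claim I would assume $G$ has a pair of parallel edges joining $u$ and $v$; since $G$ is subcubic, there are exactly two or exactly three of them. If there are three, then $u$ and $v$ both have degree $3$ with every incident edge joining them, so connectivity forces $G=\Theta$, a (trivial) subdivision of $\Theta$. Otherwise let $e_1,e_2$ be the two parallel edges and set $R := V(G)\setminus\{u,v\}$. I would classify according to $|D| = (\dg{G}{u}-2)+(\dg{G}{v}-2)\in\{0,1,2\}$, the number of edges at $u,v$ other than $e_1,e_2$; each such edge must lead into $R$, since a further edge between $u$ and $v$ would be a third parallel edge and loops are forbidden. If $|D|=0$, then $u,v$ have no further neighbours and $G$ is the $2$-cycle $e_1e_2$. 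If $|D|=1$, the unique extra edge is the only link between $\{u,v\}$ and the nonempty set $R$, hence a bridge, contradicting the first part.

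The remaining and main case is $|D|=2$, i.e.\ $\dg{G}{u}=\dg{G}{v}=3$ with extra edges $f$ at $u$ and $g$ at $v$. Here $\{f,g\}$ is precisely the set of edges between $\{u,v\}$ and $R$, so by bridgelessness neither $f$ nor $g$ alone disconnects $G$, and $\{f,g\}$ is a genuine $2$-edge-cut whose two components are $K_1=\{u,v\}$ (carrying the $2$-cycle $e_1e_2$) and $K_2=G[R]$. Since $K_1$ is not a path and $G$ has no cyclic $2$-edge-cut, the contrapositive of Lemma~\ref{l:cyclic} forces $K_2$ to be a path. A degree count then pins it down: every vertex of $K_2$ other than the endvertices of $f$ and $g$ retains its $G$-degree, which being at most $2$ in a path must equal $2$, while the two path-endpoints can only be the reduced-degree vertices and hence also have $G$-degree $2$ (the degenerate possibility is that $f,g$ share their endpoint and $K_2$ is a single $2$-vertex). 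Splicing $K_2$ back via $f$ and $g$ exhibits $u$ and $v$ as joined by three internally disjoint paths, namely $e_1$, $e_2$, and the path through $K_2$, so $G$ is a subdivision of $\Theta$, as required. I expect this last case to be the only delicate step — verifying that $\{f,g\}$ is a \emph{minimal} cut and then reading off the $\Theta$-subdivision from the path together with its two attaching edges; the rest reduces to the same leaf-counting idea used for the bridge.
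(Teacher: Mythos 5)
Your proof is correct and follows essentially the same route as the paper's: a bridge would give a cyclic $1$-edge-cut by counting leaves, and a pair of parallel edges at two $3$-vertices gives the $2$-edge-cut $\{f,g\}$ whose other component must be a path (the paper argues this directly via a tree with exactly two leaves; you invoke Lemma~\ref{l:cyclic}), forcing $G$ to be a subdivision of $\Theta$. The degenerate cases you treat separately (three parallel edges, $|D|\le 1$) are subsumed in the paper's terser argument.
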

\begin{proof}
  Suppose that $e$ is a bridge in $G$ and $K$ is a component of
  $G-e$. Since $G \in \subc$, $K$ has at least two vertices. If $K$
  contains no cycle, then $K$ is a tree and it has a leaf not incident
  with $e$. This contradicts the assumption that $G\in\subc$. Thus,
  $\Setx e$ is a cyclic edge-cut, a contradiction.

  Suppose that $x,y$ are two vertices in $G$ joined by a pair of parallel
  edges and that $G$ is neither a $2$-cycle nor a subdivision of
  $\Theta$. Since $G$ is bridgeless, both $x$ and $y$ are of degree
  $3$. Let $C$ consist of the two edges incident with just one of
  $x,y$. If the component of $G-C$ not containing $x$ were acyclic, it
  would be a tree with exactly two leaves, i.e., a path or a single vertex, and $G$ would
  be a subdivision of $\Theta$. Hence, $C$ is a cyclic $2$-edge-cut of $G$
  contradicting the assumption that $G$ is cyclically $3$-edge-connected.
  
\end{proof}

\begin{observation}\label{obs:sd}
  Every cyclically $3$-edge-connected graph in $\subc$ is a cycle or a
  subdivision of a $3$-edge-connected cubic graph.
\end{observation}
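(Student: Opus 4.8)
The plan is to first dispose of the degenerate case and otherwise reduce a general $G$ to a cubic graph by suppressing all its $2$-vertices. Let $G\in\subc$ be cyclically $3$-edge-connected. If $V_3(G)=\emptyset$, then $G$ is connected and $2$-regular, hence a cycle, and we are done. So assume $V_3(G)\neq\emptyset$; by the handshake lemma $\size{V_3(G)}$ is even and thus at least $2$. By Lemma~\ref{l:bridge-parallel}, $G$ is bridgeless. I would now suppress every $2$-vertex of $G$, one at a time, to obtain a cubic multigraph $H$ of which $G$ is a subdivision: the vertices of $H$ are exactly the $3$-vertices of $G$, and each maximal path of $G$ whose interior consists only of $2$-vertices (a \emph{thread}) becomes a single edge of $H$.

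Before this makes sense I must check that every suppression is legitimate, i.e.\ that no suppressed $2$-vertex has its two neighbours coinciding, and that $H$ has no loops. An illegal suppression, or a loop of $H$, would arise only from a thread both of whose endpoints are the same $3$-vertex $v$ --- that is, a cycle attached to $v$ through $2$-vertices and using two of the three edges at $v$. The third edge at $v$ would then be a bridge of $G$, contradicting bridgelessness. Hence every thread joins two \emph{distinct} $3$-vertices, all suppressions are legitimate, and $H$ is a loopless cubic graph (parallel edges are permitted and occur precisely when two $3$-vertices are joined by more than one thread). Since $G$ is connected, so is $H$.

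It remains to prove that $H$ is $3$-edge-connected, which I expect to be the crux. First, $H$ has no bridge: a bridge $F$ of $H$ corresponds to a thread of $G$, and deleting a single edge of that thread would already disconnect $G$, contradicting that $G$ is bridgeless. Next, suppose $H$ had a $2$-edge-cut $\Setx{F_1,F_2}$, splitting $H$ into components $H_1,H_2$ (by minimality both $F_1,F_2$ cross between them). Choosing one edge $e_i$ from the thread corresponding to $F_i$ yields a minimal edge-cut $\Setx{e_1,e_2}$ of $G$. I claim each $H_j$ contains a cycle: since $H$ is cubic, the vertices of $H_j$ carry $3\size{V(H_j)}$ edge-ends, of which exactly $2$ leave $H_j$, so $H_j$ has $\bigl(3\size{V(H_j)}-2\bigr)/2$ edges; as $\size{V(H_j)}\ge 1$ this strictly exceeds $\size{V(H_j)}-1$, so $H_j$ is not a forest. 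These cycles survive in the two components of $G-\Setx{e_1,e_2}$, so $\Setx{e_1,e_2}$ is a cyclic $2$-edge-cut of $G$, contradicting cyclic $3$-edge-connectivity. Therefore $H$ has no edge-cut of size at most $2$, i.e.\ $H$ is a $3$-edge-connected cubic graph and $G$ is a subdivision of it. The delicate point is precisely this final counting step --- guaranteeing that \emph{both} sides of a putative small cut of $H$ carry a cycle, so that the cut lifts to a genuinely \emph{cyclic} cut of $G$; the remainder is bookkeeping about threads together with repeated use of Lemma~\ref{l:bridge-parallel}.
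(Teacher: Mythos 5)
Your proof is correct and follows essentially the same route as the paper: suppress the $2$-vertices (using Lemma~\ref{l:bridge-parallel} to ensure this is legitimate) and lift any putative small edge-cut of the resulting cubic graph back to a cyclic edge-cut of $G$. The only cosmetic difference is that you certify both sides of the lifted cut contain cycles by a direct edge count in the cubic graph, where the paper instead invokes Lemma~\ref{l:cyclic}; both work.
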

\begin{proof}
  Suppose that $G\in\subc$ is cyclically $3$-edge-connected and
  different from a cycle. Let the cubic graph $G'$ be obtained by
  suppressing each vertex of degree two. (Since $G$ is bridgeless by
  Lemma~\ref{l:bridge-parallel}, this does not involve contracting a
  parallel edge.) If $C$ is a $2$-edge-cut in $G'$, then each
  component of $G'-C$ contains a $3$-vertex or is a $2$-cycle. 
  %Considering the same $3$-vertices in $G$, we infer from Lemma~\ref{l:cyclic} that $C$
  %corresponds to a cyclic edge-cut, a contradiction.
  Lemma~\ref{l:cyclic} implies that $C$
  corresponds to a cyclic $2$-edge-cut in $G$ which is a contradiction.
  
\end{proof}

\begin{lemma}\label{l:face-two}
  Let $G\in\subc$. If each face of $G$ is incident with a $2$-vertex,
  then $G\in\sep$. Moreover, if $G$ is cyclically $3$-edge-connected,
  then $G\in\sep$ if and only if each face of $G$ is incident with a
  $2$-vertex.
\end{lemma}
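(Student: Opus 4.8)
The plan is to prove the two implications separately, noting that the unconditional first statement is exactly the ``if'' half of the equivalence, so cyclic $3$-edge-connectivity enters only through the ``only if'' half.

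For the first statement, fix a cycle $C$ and aim to disconnect $G-E(C)$. If some vertex $v$ of $C$ has degree $2$, both its edges lie on $C$, so $v$ is isolated in $G-E(C)$ and $C$ is separating. The essential case is that every vertex of $C$ has degree $3$. Assume for contradiction that $G-E(C)$ is connected. Then each vertex of $C$ retains exactly its unique chord and is therefore a leaf of $G-E(C)$. Using the embedding, no edge joins a vertex strictly inside $C$ to one strictly outside, and in $G-E(C)$ any path between the two open sides of the Jordan curve $C$ would have to use a vertex of $C$ as an internal vertex -- impossible for a leaf. Hence connectivity forces one open side of $C$, say the interior, to contain no vertex of $G$. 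A chord drawn in that empty region would join two vertices of $C$ and split them off as a $K_2$, forcing $G$ to be a subdivision of $\Theta$, excluded because its faces meet no $2$-vertex; so the interior is a single face $F$ bounded by $C$. But then $F$ is incident only with the $3$-vertices of $C$, contradicting the hypothesis. Thus every cycle is separating and $G\in\sep$.

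For the ``only if'' direction I argue by contraposition: assuming some face $F$ is incident with no $2$-vertex, I produce a non-separating cycle. By Observation~\ref{obs:sd}, $G$ is either a cycle -- which has only $2$-vertices, so this case cannot occur -- or a subdivision of a $3$-edge-connected cubic graph $H$; since edge- and vertex-connectivity coincide for cubic graphs, $H$ is $3$-connected. As $F$ meets no $2$-vertex, its boundary contains no degree-$2$ (suppressed) vertex, so $\partial F$ is already a face-bounding cycle $C$ of $H$. The key point is that in a $3$-connected plane graph the boundary of every face is non-separating: if $H-V(C)$ split into components $J_1,J_2$ inside the disk bounded by $C$, their attachment points on $C$ could not interleave by planarity, and the two interface vertices of $C$ between them would form a $2$-cut, contradicting $3$-connectivity; hence $H-V(C)$ is connected, and reattaching each vertex of $C$ through its chord shows $H-E(C)$ is connected as well. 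Finally, subdividing the edges of $H$ off $C$ preserves this connectivity, so $C$ is non-separating in $G$, whence $G\notin\sep$.

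The main obstacle is precisely the non-separation of a face-bounding cycle all of whose vertices have degree $3$: in the first part this is the vanishing of one side of $C$, and in the second part it is the genuine use of $3$-connectivity (an edge-count bound on the cut $E(C)$, or mere $2$-connectivity, does not suffice, as the interface argument shows). The small configurations that must be dispatched by hand -- $G$ a single cycle and $G$ a subdivision of $\Theta$ -- are excluded directly by the hypotheses, as indicated.
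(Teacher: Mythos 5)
Your proof is correct and follows essentially the same route as the paper: the first half amounts to showing that in a subcubic plane graph every non-facial cycle is separating (which the paper simply asserts) and then isolating a $2$-vertex on each facial cycle, while the second half uses Observation~\ref{obs:sd} to pass to a $3$-edge-connected cubic graph $H$ and invokes the non-separation of face boundaries there (which the paper cites as well known); you just supply proofs of both standard facts. The one imprecision worth fixing is the step ``$H$ is $3$-connected'': this fails when $H=\Theta$ (i.e.\ when $G$ is a subdivision of $\Theta$ with two of its three paths unsubdivided), since $\Theta$ is $3$-edge-connected but has only two vertices; that case must be checked directly (the $2$-cycle bounding the bad face is non-separating because deleting its edges leaves the third parallel edge), and similarly in the first half the chord case forces $G$ to be exactly $\Theta$, not a proper subdivision of it. Neither point affects the validity of the overall argument.
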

\begin{proof}
  In a graph in $\sep$, any cycle that is not a facial cycle is
  separating. Thus, if $G\in\subc$ and each face is incident with a
  $2$-vertex, then $G\in\sep$. The second assertion is trivially true
  if $G$ is a cycle. Suppose thus, using Observation~\ref{obs:sd},
  that $G$ is a subdivision of a $3$-edge-connected cubic graph. It is
  well known that in a $3$-edge-connected plane graph, facial cycles are
  exactly the non-separating cycles. Thus, if $G\in\sep$, then every
  face is incident with a $2$-vertex.
\end{proof}

Graphs $G\in\sep$ with cyclic $2$-edge-cuts may have
faces which are not incident with $2$-vertices. We will use in the next section     
the following subset of $\sep$.

\begin{definition}
Let $\sepf$ be the class of all connected plane graphs $G \in \sep$ such that 
each face of $G$ is incident with a $2$-vertex.
\end{definition}

%We define $\sepf$ as
%the class of all connected plane graphs $G \in \sep$ such that each face of $G$
%is incident with a $2$-vertex. Thus, $\sepf\subseteq\sep$. 

The next lemma will be used in the proof of Theorem~\ref{t:induction}.

\begin{lemma}\label{l:nbr}
  Let $G \in \subc$ be cyclically $3$-edge-connected and
  let $u$ be a $2$-vertex of the outer face, with distinct neighbours
  $x$ and $y$ of degree $3$ (see Figure~\ref{fig:verts}). Let the other neighbours of
  $y$ be denoted by $a,b$ and the other neighbours of $x$ by $c,d$, such that
  the clockwise order of the neighbours of $y$ ($x$) is $uba$ ($udc$,
  respectively). Then all of the following conditions hold, unless $G$
  is a subdivision of $\Theta$ or of $K_4$:
  \begin{enumerate}
  \item[{\upshape (1)}] $\Setx{a,b,c,d} \cap \Setx{x,y} = \emptyset$,
  \item[{\upshape (2)}] $\Setx{a,d} \cap \Setx{b,c} = \emptyset$, and
  \item[{\upshape (3)}] $b\neq c$ or $a\neq d$.
  \end{enumerate}
\end{lemma}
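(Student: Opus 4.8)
The plan is to prove the contrapositive in spirit: assume one of the three conditions fails and derive that $G$ must be a subdivision of $\Theta$ or of $K_4$, using the cyclic $3$-edge-connectivity as the main lever. Throughout I would rely on Lemma~\ref{l:bridge-parallel} to control parallel edges (since any pair of parallel edges forces $G$ to be a $2$-cycle or a subdivision of $\Theta$) and on the fact that $x,y$ both have degree $3$ with $u$ as a common neighbour. The strategy in each case is the same: whenever two of the labelled vertices coincide in a forbidden way, I would exhibit a small cyclic edge-cut or a forced parallel-edge configuration, and then argue that cyclic $3$-edge-connectivity collapses $G$ down to one of the two exceptional graphs.

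\medskip\noindent\textbf{Condition (1).}
Suppose $\Setx{a,b,c,d}\cap\Setx{x,y}\neq\emptyset$. First I would rule out the cases forcing parallel edges: if, say, $a=x$ or $b=x$, then $x$ and $y$ would be joined by an edge in addition to the path $xuy$; more directly, if one of $x$'s other neighbours equals $y$ (i.e.\ $c=y$ or $d=y$), then $x$ and $y$ are joined by a direct edge as well as by the path through $u$, which together with Lemma~\ref{l:bridge-parallel} would already be suggestive. The genuinely new configurations are $x=y$-type coincidences among the outer neighbours. I expect that whichever coincidence occurs, the three edges incident with $\Setx{x,y,u}$ (or a subset of them) form a small edge-cut separating $\Setx{x,y,u}$ from the rest of $G$; since $u$ has degree $2$ and $x,y$ have degree $3$, the number of edges leaving $\Setx{x,y,u}$ is at most $4-2=2$ after the coincidence reduces the count, giving a $2$-edge-cut, and Lemma~\ref{l:cyclic} would force it to be cyclic unless the outside is a path, which would make $G$ a subdivision of $\Theta$.

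\medskip\noindent\textbf{Conditions (2) and (3).}
For condition (2), suppose one of $a,d$ equals one of $b,c$; say $a=c$. Then $a$ is adjacent to both $x$ and $y$, and together with the path $xuy$ we obtain a cycle $x u y a x$, and the edges leaving $\Setx{u,x,y,a}$ number at most $2$ (one remaining edge at $a$, the edge $yb$, the edge $xd$, minus coincidences), which I would analyse as a $2$-edge-cut via Lemma~\ref{l:cyclic}; the exceptional outcome where the complementary side is a path again yields a subdivision of $K_4$ (the four vertices $u,x,y,a$ with the appropriate subdivided edges). Condition (3) says we cannot have simultaneously $b=c$ and $a=d$; if both held, then $x$ and $y$ would have \emph{both} of their non-$u$ neighbours in common, so $\Setx{u,x,y}$ together with the two shared neighbours would be joined to the rest of $G$ by very few edges — indeed, once $b=c$ and $a=d$, the set $\Setx{x,y,u,a,b}$ has no edges leaving it except possibly at $a$ and $b$, and I expect this to force $G$ to be exactly a subdivision of $K_4$.

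\medskip
The main obstacle I anticipate is bookkeeping: in each coincidence case I must carefully recount the edges leaving the small vertex set, because the labels $a,b,c,d$ may collide with each other in several ways simultaneously, and each collision changes the size of the putative edge-cut. The clean way to handle this is to treat the vertex set $S\in\Setx{\Setx{u,x,y},\Setx{u,x,y,a},\Setx{u,x,y,a,b}}$ appropriate to the case, compute $|\partial S|$ using the degree sequence ($u$ contributes degree $2$, the others degree $3$), and invoke Lemma~\ref{l:cyclic} together with Lemma~\ref{l:bridge-parallel} to conclude that either $\partial S$ is a cyclic edge-cut of size $<3$ (contradicting cyclic $3$-edge-connectivity) or the complement of $S$ is a path / small tree, in which case $G$ is forced to be precisely a subdivision of $\Theta$ or of $K_4$. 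I would organise the whole argument as a short case analysis keyed to which equation among the three fails, reusing this edge-counting lemma in each branch.
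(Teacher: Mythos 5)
Your overall strategy --- exhibit a small edge-cut around the coincident vertices and invoke Lemmas~\ref{l:cyclic} and \ref{l:bridge-parallel} --- is essentially what the paper does for conditions (1) and (3), and your edge counts there are correct: any violation of (1) is equivalent to the edge $xy$ existing, giving exactly two edges leaving $\Setx{u,x,y}$, and for (3) the set $\Setx{u,x,y,a,b}$ has $\deg a+\deg b-4\le 2$ leaving edges, so your case split on the degrees of $a,b$ reproduces the paper's analysis (subdivision of $\Theta$, a bridge, or a subdivision of $K_4$).

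Condition (2), however, has a genuine gap. If $a=c$, the set $S=\Setx{u,x,y,a}$ has degree sum $8+\deg a$ and four internal edges ($ux$, $uy$, $ya$, $xa$), so the number of edges leaving $S$ is exactly $\deg a$, which is $3$ when $a$ is a $3$-vertex --- not ``at most $2$'' as you assert --- and a $3$-edge-cut does not contradict cyclic $3$-edge-connectivity. The configuration is genuinely realizable without planarity: in the triangular prism with one edge subdivided by $u$, the two neighbours $x,y$ of $u$ have a common $3$-valent neighbour $z$, the set $\Setx{u,x,y,z}$ sends exactly three edges to the rest of the graph, and the graph is neither a subdivision of $\Theta$ nor of $K_4$. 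So no abstract counting argument can rule out $a=c$; what rules it out is the prescribed clockwise rotation. The paper's proof observes that, by planarity, the edges $yb$ and $xd$ leave the $4$-cycle $xuyax$ into opposite regions of the plane, so one region receives at most one edge from the cycle, and that edge is a bridge, contradicting Lemma~\ref{l:bridge-parallel}. This is the one place where the clockwise orderings in the hypothesis are essential, and your proposal never uses them. (The paper also uses this planarity step in its proof of (1) to distinguish $y=c$ from $y=d$, though there your counting argument happens to make the distinction unnecessary.)
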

\begin{proof}
  We prove (1). Consider the vertex $x$ and suppose that $x=a$. 
  Then $c$ or $d$ is $y$ otherwise $x$ would have degree $4$.
  Therefore $y=d$ since $y=c$ would imply that $xd$ is a bridge, contradicting
  Lemma~\ref{l:bridge-parallel}. Then the set of edges $C=\Setx{xc,yb}$ is
  a $2$-edge-cut. Lemma~\ref{l:cyclic} implies that the component of $G-C$ not
  containing $x$ is a path. Hence, $G$ is a subdivision of $\Theta$ which is a 
  contradiction. Thus, $x\neq a$. Essentially the same argument shows that $x\neq
  b$. Trivially, $c\neq x\neq d$, so $x \notin\Setx{a,b,c,d}$. By symmetry, we conclude that (1)
  holds.

  To prove (2), note that $a\neq b$ by
  Lemma~\ref{l:bridge-parallel}. If $a=c$, then $yb$ or $xd$ would be
  a bridge by a planarity argument, contradicting
  Lemma~\ref{l:bridge-parallel}.  Thus, $a\notin\Setx{b,c}$, and by
  symmetry, $d\notin\Setx{b,c}$.

  Finally, we prove (3). Suppose that $b=c$ and $a=d$. If both $a$ and
  $b$ are $2$-vertices, then $G$ is a subdivision of
  $\Theta$. Otherwise, they must both be $3$-vertices as $G$ would
  otherwise contain a bridge. If they are adjacent, then $G$ is a
  subdivision of $K_4$ contrary to the assumption.
  Thus, we may assume that there
  is a $2$-edge-cut $C$ such that one edge in $C$ is incident with $a$
  and the other one with $b$, and none of these edges is incident with
  $x$ nor $y$. Since $G$ is cyclically $3$-edge-connected, the
  component of $G-C$ not containing $a$ is a path, so $G$ is a
  subdivision of $K_4$, which is a contradiction.
\end{proof}

\begin{figure}
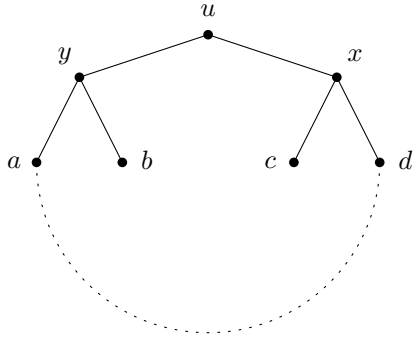

  \centering
  \fig1
  \caption{The situation in Lemma~\ref{l:nbr}. The dotted line
    indicates part of the boundary of the outer face. A priori, some
    of the vertices $a$, $b$, $c$, $d$ may coincide and $b$, $c$ may
    be incident with the outer face.}
  \label{fig:verts}
\end{figure}

%%%%%%%%%%%%%%%%%%%%%%%%%%%%%%%%%%%%%%%%%%%%%%%%%%%%%%%%%%%%%%%%%%%%%%

%\section{$2$-decompositions with prescribed edges}
\section{Decomposition into a forest and a matching with prescribed edges}

To find a decomposition of a connected graph into a spanning tree and
a matching, it is clearly sufficient to decompose it into a forest and
a matching. Thus, we define a \emph{$2$-decomposition} of a graph $G$
as a decomposition $E(G) = E(F)\cup E(M)$ such that $F$ is a forest and $M$
is a matching (called the \emph{forest part} and the \emph{matching
  part} of the decomposition, respectively). If $B$ is a set of edges
of $G$, then a \emph{$B$-$2$-decomposition} (abbreviated
\emph{$B$-2D}) of $G$ is a $2$-decomposition whose forest part
contains $B$. Obviously, if $B$ contains all edges of a cycle, 
then $G$ cannot have a $B$-2D. Note also that there are graphs in $\sep$ without a $B$-2D 
where $B$ consists only of a few $(2,3)$-edges; for an example see 
Figure~\ref{fig:counter}. Let us define $B(2,3)$ as the set of
$(2,3)$-edges of $B$ and call a vertex \emph{sensitive} if it is a
$2$-vertex incident with an edge in $B(2,3)$.

The following theorem is the main statement needed to prove
Theorem~\ref{main}. Examples in Figure~\ref{fig:counter} show some
limitations to relaxing the conditions in Theorem~\ref{t:induction}.

%(Kenta) By Lemma 6, the graph $G$ in either case (a) and (b) in Theorem 8 is in $\sepf$. Is it better to use $\sepf$ instead of $\sep$?

%\\
\begin{figure}[h]
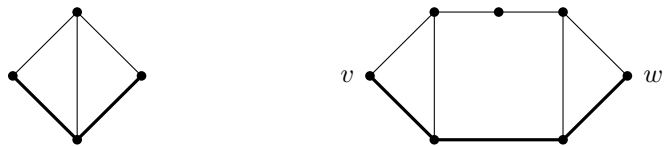

  \centering
  \hf\fig4\hf\fig3\hf
  \caption{Two graphs $G\in\sepf$ and edge sets $B$ (bold) such that
    $G$ admits no $B$-2D. Left: example showing that condition (a) in
    Theorem~\ref{t:induction} cannot be relaxed to allow
    $\size{B(2,3)} > 1$. Right: example showing that
    condition~\ref{i:another} cannot be dropped.}
  \label{fig:counter}
\end{figure}

\begin{thm}
  \label{t:induction}
  Let $G \in \sepf$ be $2$-edge-connected and not a cycle. Let $F_0$ be
  the outer face of $G$, and let $B$ be a set of edges contained in
  the boundary of $F_0$. Suppose that either
  \begin{enumerate}[label=(\alph*)]
  \item\label{i:3conn} $G$ is cyclically $3$-edge-connected and
    $\size{B(2,3)} \leq 1$, or
  \item\label{i:n3conn} $G$ contains a cyclic $2$-edge-cut 
    and there are distinct vertices $v,w$ incident with $F_0$ such
    that $v$ is a $2$-vertex and all of the following hold:
    \begin{enumerate}[label=(b\arabic*)]
    \item\label{i:sep} $v,w$ are separated by every cyclic
      $2$-edge-cut of $G$,
    \item\label{i:path} all edges in $B$ are contained in a
      $vw$-subpath of the boundary of $F_0$,
    \item\label{i:another} if $v$ is a sensitive vertex, then the
      inner face of $G$ incident with $v$ is incident with another
      $2$-vertex, and
%    \item\label{i:two} the only possibly sensitive vertices are $v$
%      and $w$.
    \item\label{i:two} 
      every sensitive vertex which is not $v$ is either $w$ or adjacent to $w$.
%for any edge $rs$ in $B(2,3)$ with $2$-vertex $r$ and $3$-vertex $s$, we have either $r = v$ or $w \in \{r,s\}$.
%(This, together with condition (b2), implies $|B(2,3)| \leq 2$.)
    \end{enumerate}
  \end{enumerate}
  Then $G$ admits a $B$-2D.
\end{thm}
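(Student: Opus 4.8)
The statement is named for the induction it demands, so I would argue by induction on $\size{E(G)}$ (equivalently on $\size{V(G)}$), treating the two hypotheses \ref{i:3conn} and \ref{i:n3conn} by separate reductions. The base cases are the smallest $2$-edge-connected members of $\sepf$ that are not cycles; in particular the subdivisions of $\Theta$ and of $K_4$ excluded in Lemma~\ref{l:nbr} must be disposed of by hand, exhibiting an explicit $B$-2D and using that $B$ lies on the boundary of $F_0$ and, in case~\ref{i:3conn}, that at most one edge of $B$ is a $(2,3)$-edge. Everything else is reduced to a strictly smaller instance satisfying the same hypotheses.

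For the inductive step in case~\ref{i:n3conn} I would exploit the given cyclic $2$-edge-cut. Fix such a cut $C=\Setx{e_1,e_2}$; by \ref{i:sep} it separates $G$ into components $K_1\ni v$ and $K_2\ni w$. I would reattach each side across $C$ to produce smaller graphs $G_1,G_2\in\sepf$ — on each side joining or identifying the two cut-endpoints so that the result is again $2$-edge-connected with every face incident with a $2$-vertex — and then choose new boundary edge-sets $B_1,B_2$ and new marked vertices so that the theorem's hypotheses hold on each $G_i$. Conditions \ref{i:path}, \ref{i:another} and \ref{i:two} are precisely what is needed to distribute $B$ and the sensitive vertices correctly between the two pieces; that \ref{i:another} cannot be dropped is exactly the content of the right-hand counterexample of Figure~\ref{fig:counter}. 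Applying the induction hypothesis to $G_1$ and $G_2$ yields a $B_i$-2D of each, and these are glued back along $C$: exactly one edge of $C$ is placed in the forest, connecting the two disjoint smaller forests without creating a cycle, and the other edge is placed in the matching after checking that its endpoints are not already matched.

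For case~\ref{i:3conn}, where $G$ is cyclically $3$-edge-connected and hence (by Observation~\ref{obs:sd}) a subdivision of a $3$-edge-connected cubic graph, there is no cut to split along, so I would reduce locally instead. Since $G\in\sepf$, the outer face carries a $2$-vertex $u$; let $x,y$ be its degree-$3$ neighbours and apply Lemma~\ref{l:nbr} to gain control over the second neighbourhood $a,b,c,d$ (the excluded subdivisions of $\Theta$ and $K_4$ being base cases). Guided by the positions of $a,b,c,d$, I would then perform a bounded local modification at $u,x,y$ — for instance deleting $u$ and short-circuiting $x$ to $y$, or contracting a suitable incident edge — to obtain a strictly smaller $G'\in\sepf$. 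Either $G'$ is still cyclically $3$-edge-connected, in which case case~\ref{i:3conn} applies with the single-$(2,3)$-edge budget preserved, or the modification creates a cyclic $2$-edge-cut, in which case Lemma~\ref{l:nbr} supplies vertices $v,w$ that can be checked to satisfy \ref{i:sep}--\ref{i:two}, and case~\ref{i:n3conn} applies. Lifting the resulting $B'$-2D through the reduction restores the deleted or contracted edges, assigning $ux$ and $uy$ to the forest and matching so as to keep $B\subseteq E(F)$.

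The main obstacle in both cases is the bookkeeping rather than a single decisive idea: one must choose the split (respectively the local reduction) together with the new data $B_i,v_i,w_i$ so that every one of the delicate conditions \ref{i:sep}--\ref{i:two} is inherited by the smaller graph(s), and so that recombining the two $2$-decompositions keeps the forest acyclic while keeping the matching a matching. The conditions \ref{i:another} and \ref{i:two}, together with the bound $\size{B(2,3)}\le 1$, are exactly the slack that makes this inheritance possible — as the two graphs in Figure~\ref{fig:counter} confirm — and arranging the reductions to respect all of them simultaneously is the crux of the proof.
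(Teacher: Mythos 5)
Your skeleton matches the paper's: a minimal counterexample / induction on $\size{V(G)}$, a splitting along a cyclic $2$-edge-cut in case~\ref{i:n3conn}, and a local reduction at a $2$-vertex of the outer face (via Lemma~\ref{l:nbr}) in case~\ref{i:3conn}. But in both cases the sketch stops exactly where the real difficulty begins, and the specific mechanisms you propose do not work as stated.

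In case~\ref{i:n3conn}, the recipe ``exactly one edge of $C$ goes in the forest, the other in the matching after checking its endpoints are unmatched'' is not achievable in general: you have no way to force the endpoints of the second edge to be unmatched in decompositions obtained from an induction hypothesis, and in fact the paper never glues this way. The actual argument first chooses $C=\Setx{e_1,e_2}$ so that the component $K_1$ containing $v$ is inclusion-minimal (this is what makes $G_1$ cyclically $3$-edge-connected, hence a case-(a) instance --- without it your claim that ``the theorem's hypotheses hold on each $G_i$'' fails), and then branches on whether the inner face of $G$ containing $C$ has its extra $2$-vertex on the $K_1$ side (formally, whether $G_1^*=G_1/e_1$ lies in $\sep$). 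If it does, one forces \emph{both} $e_1,e_2$ into the forest of $G_1$ and requires nothing of $G_2$; if it does not, one forces $e_2$ into both forests, using that $2$-vertex $z$ to verify condition~\ref{i:another} for $(G_2,B_2+e_2)$ with $u_2$ playing the role of $v$. Exactly one of the two gluings is available in any given situation, and your proposal does not identify this dichotomy, which is the crux of the case.

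In case~\ref{i:3conn}, your assertion that the reduction preserves ``the single-$(2,3)$-edge budget'' is false in general: after deleting $u$ the vertex $x$ drops to degree $2$, so an edge $xd\in B$ that was a $(3,3)$-edge in $G$ becomes a $(2,3)$-edge in $G'$, and the edge $ya$ replacing $yb$ in $B'$ can be a second one, giving $\size{B'(2,3)}=2$. The paper must rule out the subcase where $G'$ is still cyclically $3$-edge-connected by a separate argument (further contraction of $xc$ when $\degree{G}{c}=3$, and an exchange $F+xd-xc$ in the forest when $\degree{G}{c}=2$), and only then can it fall back on condition~\ref{i:n3conn} for $G'$ with $v=x$, $w=y$. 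So while your overall plan is the right one, the two places you defer to ``bookkeeping'' are precisely the places where a new idea (the $G_1^*\in\sep$ dichotomy, and the elimination of the $\size{B'(2,3)}=2$ subcase) is required; as written the proof has genuine gaps there.
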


Note that if $G$ in Theorem \ref{t:induction} has a cyclic $2$-edge-cut, then conditions (b2) and (b4) imply that $|B(2,3)| \leq 2$.
Before we start with the proof, we explain how we use contraction in this section.
Suppose we contract an edge $e=vw$ in a graph $H$ into the vertex $v$, then $w \not\in V(H/e)$, $v \in V(H/e)$ and each vertex of $H/e-v$ has the same vertex-label as the corresponding vertex in $H-v-w$. For the proof it will be essential that every edge of $H/e$ corresponds to an edge of $H-e$ and vice versa. We will use this edge-correspondence between the graphs $H/e$ and $H$ for edges which are not $e$ and edge-sets that do not contain $e$, without referring to it. To avoid later confusion, note that an edge $vx \in E(H/e)$ can correspond to an edge in $H$ with other endvertices than in $H/e$, namely $wx$. 
\\

\begin{proof} %(Theorem \ref{t:induction})\\
  Suppose by contradiction that $G$ is a counterexample
  with $\size{V(G)}$ minimum. Moreover, let $B$ be a set of edges satisfying the
  assumptions of the theorem, such that $G$ has no $B$-2D and $\size B$ is maximum.                                                 

  %It is straightforward to verify that $G$ is
  %neither a subdivision of $\Theta$ nor of $K_4$.

  We begin with a technical claim:

    %\begin{claim}\label{cl:contract}                                    
    %  Let $rs$ be an edge of the outer face boundary of a graph
    %  $H\in\subc$ such that $\degree H r = 2$ and two neighbours of $r$ are distinct vertices.
    %  Let $H'$ be obtained by
    %  contracting $rs$ into $r$. If $H'$ admits a $B'$-2D (for some
    %  set of edges $B'$), then $H$ admits a $(B'+rs)$-2D.
    %\end{claim}
    %\begin{claimproof}
    %  Let $(F',M')$ be a $B'$-2D of $H'$. Let $F = F'+rs$ and let $z$
    %  denote the neighbour of $r$ in $H$ distinct from $s$. We claim that $F$
    %  is acyclic. Consider the edge $rz$. (Note that the contraction may create multiple edges in $H'$ between $r$ and $z$,   %but in such a case, the edge $rz$ here means the one connecting $r$ and $z$ also in $H$.) If $rz\in F'$, then this is clear %as $F$ is obtained
     % from $F'$ by subdividing an edge. Otherwise, adding $rs$ just
     % adds a leaf to $F'$. In either case, we obtain a $(B'+rs)$-2D of
     % $H$.
    %\end{claimproof}

    \begin{claim}\label{cl:contract}                                    
      Let $rs$ be an edge of a graph $H\in\subc$ where $\degree H r = 2$ 
      and both neighbours of $r$ are distinct. Let $H'$ be obtained from $H$ by contracting $rs$ into $r$ and 
      let $B' \subseteq E(H')$. If $H'$ has a $B'$-2D, then $H$ admits a $(B'+rs)$-2D.
     \end{claim}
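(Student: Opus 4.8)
The plan is to transport a given $B'$-2D of $H'$ back to $H$ along the edge-correspondence between $H'=H/rs$ and $H-rs$ described above, and then add the contracted edge $rs$ to the forest part. Concretely, write the hypothesised decomposition as $E(H')=E(F')\cup E(M')$ with $F'$ a forest, $M'$ a matching and $B'\subseteq E(F')$. Under the bijection $E(H')\leftrightarrow E(H-rs)$, let $\hat F$ and $\hat M$ be the edge sets of $H-rs$ corresponding to $E(F')$ and $E(M')$. I would then set the forest part of the candidate decomposition of $H$ to be $F=\hat F+rs$ and the matching part to be $M=\hat M$. Since $B'\subseteq E(F')$ corresponds to a subset of $\hat F\subseteq E(F)$ and $rs\in E(F)$, the inclusion $B'+rs\subseteq E(F)$ is immediate, so it remains only to check that $F$ is a forest and $M$ a matching.

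For the matching condition, observe that the only vertices whose incident edges are altered by un-contracting $rs$ are $r$ and $s$; every other vertex retains exactly its edges, and their $\hat M$-membership, from $H'$. Since $r$ has degree $2$ in $H$ with two distinct neighbours, the edges incident with $r$ in $H'$ are $rt$ (where $t$ is the second neighbour of $r$) together with the edges inherited from $s$, and $M'$ being a matching means at most one of these lies in $M'$. I would then dispatch the few cases: if that edge is $rt$, it remains the unique $\hat M$-edge at $r$ while $s$ is $\hat M$-free; if it is one of the edges inherited from $s$, it becomes the unique $\hat M$-edge at $s$ while $r$ is $\hat M$-free; and if there is no such edge, both $r$ and $s$ are $\hat M$-free. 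In every case $rs\notin M$, so no two edges of $M$ meet at $r$ or $s$, and $M$ is a matching.

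For the forest condition, the clean observation is that $rs$ is a non-loop edge of $F$ which, because $r$ has degree $2$ with two distinct neighbours, is not parallel to any other edge of $H$ and hence of $F$. Contracting $rs$ in $F$ therefore yields precisely the graph with edge set $\hat F$, which under the correspondence is $F'$; that is, $F/rs=F'$. As $F$ is acyclic if and only if $F/e$ is acyclic whenever $e$ is a non-loop, non-parallel edge, and $F'$ is a forest, it follows that $F$ is a forest. I expect this last step to be the only genuine obstacle, since it amounts to ruling out that adding $rs$ closes a cycle, equivalently that $r$ and $s$ are already joined by a path in $\hat F$; the identity $F/rs=F'$ settles this cleanly. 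It is worth double-checking the degenerate possibility that contracting $rs$ creates parallel edges in $H'$ (which happens exactly when $s$ is adjacent to $t$), but the edge-correspondence keeps those parallel copies distinct, so the argument is unaffected. Combining the three verifications yields the desired $(B'+rs)$-2D of $H$.
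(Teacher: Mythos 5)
Your proposal is correct and follows essentially the same route as the paper: take the given $B'$-2D of $H'$, put $F=F'+rs$ and keep the matching part, then verify both conditions. The paper's own proof is far terser (it merely splits on whether $rz\in F'$ and asserts acyclicity), whereas your observation that $F/rs=F'$ together with the fact that contracting a non-loop, non-parallel edge preserves acyclicity in both directions is a clean and complete justification of the same step.
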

    
    \begin{claimproof}
      Let $(F',M')$ be a $B'$-2D of $H'$. Let $F = F'+rs$ and let $z$
      denote the neighbour of $r$ in $H$ distinct from $s$. 
%We claim that $F$
      %is acyclic. Consider the edge $rz$. (Note that the contraction may create multiple edges in $H'$ between $r$ and $z$, but in such a case, the edge $rz$ here means the one connecting $r$ and $z$ also in $H$.) 
Then $rz\in F'$ or $rz \not\in F'$. In each case, $F$ is a forest of
$H$; in fact, $F$ is the forest part of a $(B'+rs)$-2D of
      $H$. The matching part of the desired 2D is $E(H)-E(F)$.

%then this is clear as $F$ is obtained
 %     from $F'$ by subdividing an edge. 
%Otherwise, adding $rs$ just
      %adds a leaf to $F'$. 
%In either case, we obtain a $(B'+rs)$-2D of
      %$H$.
    \end{claimproof}

  We distinguish two main cases.

  \begin{xcase}{I}{$G$ satisfies condition (a) in the theorem}
  
  We start with the following claim:

    \begin{claim}\label{cl:no22}
      $G$ contains no $(2,2)$-edge.
    \end{claim}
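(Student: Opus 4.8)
The plan is to argue by contradiction and remove the $(2,2)$-edge by contraction, using the minimality of $G$. So suppose $rs$ is a $(2,2)$-edge, write $z$ for the neighbour of $r$ other than $s$ and $w$ for the neighbour of $s$ other than $r$. First I would check that $rs$ is contractible, i.e. that $r$ has two distinct neighbours. Since $\degree G r=\degree G s=2$, a pair of parallel edges between $r$ and $s$ would make $\Setx{r,s}$ a $2$-cycle component and $G$ a cycle, which is excluded; hence $r$ has the distinct neighbours $s$ and $z$, as required by Claim~\ref{cl:contract}. I would also note $z\neq w$: otherwise $r,s,z$ would span a triangle, and as $G$ is bridgeless (Lemma~\ref{l:bridge-parallel}) the vertex $z$ could have no further neighbour, forcing $G$ to be this triangle --- again a cycle. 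Thus contracting $rs$ into $r$ yields $G'=G/rs\in\subc$ with one fewer vertex, no new parallel edges, and with $r$ a $2$-vertex whose distinct neighbours are $z$ and $w$.

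Next I would verify that $G'$ still satisfies the hypotheses of condition~(a). For cyclic $3$-edge-connectivity: a cyclic $2$-edge-cut $C'$ of $G'$ consists of two edges different from $rs$, which pull back under the edge-correspondence to a two-element edge-cut $C$ of $G$; the two sides of $G-C$ arise from those of $G'-C'$ by splitting the merged vertex $r$ back into the adjacent $2$-vertices $r,s$, an operation that destroys no cycle, so $C$ would be a cyclic $2$-edge-cut of $G$, a contradiction. Moreover $G'$ is not a cycle, since $G$ is a subdivision of $G'$ (subdivide the edge $rw$ of $G'$ by the new vertex $s$) and $G$ is not a cycle, and $G'$ is $2$-edge-connected because contraction cannot decrease edge-connectivity. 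For $G'\in\sepf$ I would use that the adjacent $2$-vertices $r,s$ are incident with exactly the same two faces of $G$; after contraction the merged $2$-vertex $r$ is still incident with both of these, and every other face retains its $2$-vertex (which cannot be $s$, as $s$ meets only the two faces at $rs$). Hence every face of $G'$ is incident with a $2$-vertex.

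Finally I would transport $B$ by setting $B'=B-rs$, read in $G'$ via the edge-correspondence. Every edge of $B'$ lies on the boundary of the outer face $F_0$ of $G$, hence on the boundary of the outer face of $G'$; and since contracting a $(2,2)$-edge preserves all vertex degrees, no edge changes type, so $B'(2,3)=B(2,3)$ and $\size{B'(2,3)}\le 1$. Thus $G'$ meets the assumptions of condition~(a) with fewer vertices, so by minimality it has a $B'$-2D; Claim~\ref{cl:contract} then produces a $(B'+rs)$-2D of $G$, and as $B\subseteq B'+rs$ this is a $B$-2D of $G$, contradicting the choice of $G$. The main obstacle I anticipate is the pair of class-preservation checks --- that the contraction keeps $G'$ cyclically $3$-edge-connected and keeps every face incident with a $2$-vertex --- whereas the degree-preservation bounding $\size{B'(2,3)}$ and the lift through Claim~\ref{cl:contract} are routine.
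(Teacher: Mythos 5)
Your proposal is correct and follows exactly the paper's argument: contract the $(2,2)$-edge, observe that the resulting graph is a $2$-edge-connected member of $\sepf$ still satisfying condition (a), apply the minimality of $G$ to get a $(B-f)$-2D, and lift it back via Claim~\ref{cl:contract}. The paper states this in three sentences and leaves all the verification implicit; you have simply filled in those routine checks (contractibility, preservation of cyclic $3$-edge-connectivity, the face condition, and $\size{B'(2,3)}\le 1$), all of which are sound.
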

    \begin{claimproof}
      For contradiction, suppose that $f$ is such an edge; contracting
      $f$, we obtain a $2$-edge-connected graph in $\sepf$ satisfying
      condition (a) of the theorem. By the minimality of $G$, the 
      resulting graph admits a
      $(B-f)$-2D. Then Claim 1 implies a $B$-2D of $G$, a contradiction.
    \end{claimproof}

    Using Claim \ref{cl:no22} it is straightforward to verify that when $G$ is
    a subdivision of $\Theta$ or of $K_4$, then $G$ has a $B$-2D. Thus, we may assume that $G$ is not a subdivision of either of these graphs.
     
    Note that we often refer to edges of $G$ only by their endvertices
    (for example, $xc$). This is sufficient, since by
    Lemma~\ref{l:bridge-parallel}, $G$ contains no parallel edges.

    Since $G \in \sepf$, the outer face is incident with a $2$-vertex,
    which is by Claim~\ref{cl:no22} incident with a $(2,3)$-edge. 
    If $B(2,3)= \emptyset$, then we can add any $(2,3)$-edge into $B(2,3)$,
preserving condition (a) in Theorem \ref{t:induction}.
	Then by the maximality of $B$, we obtain a $B$-2D, a contradiction.
	Therefore, we may assume that $\size{B(2,3)}=1$.

    Let $e=ux$ denote the unique edge in $B(2,3)$,
    with $u\in V_2(G)$, and let the neighbour of $u$ other than $x$ be
    denoted by $y$, see Figure~\ref{fig:verts}. Note that $x,y\in V_3(G)$. 
    Label the neighbours of $x,y$ distinct from $u$ by $a,b,c,d$ as in
    Lemma~\ref{l:nbr}. Since $G$ is neither a subdivision of $\Theta$ nor
    of $K_4$, we may assume by Lemma~\ref{l:nbr} that the vertices
    $a,b,c,d,x,y,u$ are all distinct, except that possibly $a=d$ or
    $b=c$ (but not both). 
    %We refer to edges of $G$ by their
    %endvertices (for example, $xc$); since $G$ contains no parallel
    %edges by Lemma~\ref{l:bridge-parallel}, this is sufficient.

    Let $G'$ be the graph obtained from $G$ by removing $u$ and
    contracting the edge $yb$ into $y$.
%Note that $ya \in E(G')$ corresponds to a path in $G$ consisting of the two edges, 
%    $ay$ and $yb$.

    \begin{claim}\label{cl:not-3-conn}
      $G'$ is not cyclically $3$-edge-connected.
    \end{claim}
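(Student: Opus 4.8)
The plan is to argue by contradiction: I assume that $G'$ \emph{is} cyclically $3$-edge-connected and then produce a $B$-2D of $G$, contradicting the choice of $G$ as a counterexample. Since $G'$ is obtained from $G$ by deleting $u$ and contracting $yb$, we have $\size{V(G')}=\size{V(G)}-2$, so the minimality of $G$ applies as soon as $G'$, together with a suitable edge set $B'$, is seen to satisfy the hypotheses of Theorem~\ref{t:induction} under condition~\ref{i:3conn}. First I would record the structure of $G'$. Deleting $u$ turns both $x$ and $y$ into $2$-vertices, and contracting $yb$ into $y$ yields a vertex of degree $\degree{G}{b}\le 3$, so $G'\in\subc$; it is connected and plane, and under the contradiction hypothesis it is bridgeless by Lemma~\ref{l:bridge-parallel}, hence $2$-edge-connected. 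The degenerate possibilities $a=d$ or $b=c$ allowed by Lemma~\ref{l:nbr} must be carried along but do not disturb subcubicity.

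Next I would verify $G'\in\sepf$, and this is the main obstacle. Because $G'$ is assumed cyclically $3$-edge-connected, Lemma~\ref{l:face-two} reduces the task to checking that every face of $G'$ is incident with a $2$-vertex. The only $2$-vertex destroyed is $u$, which lies on the outer face $F_0$; deleting it merges $F_0$ with the unique inner face incident with $u$, and the resulting outer face is incident with the new $2$-vertex $x$. Contracting $yb$ only shortens the two boundary walks through that edge and creates no new face. Using $G\in\sepf$ together with Claim~\ref{cl:no22} to locate the surviving $2$-vertices, I would argue that no remaining face loses its last $2$-vertex, treating separately the faces around $y$ and the degenerate coincidences $a=d$, $b=c$; if instead $G'$ should turn out to be a cycle, a $B$-2D of $G$ is written down directly. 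This face-by-face bookkeeping is where the real work lies.

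It then remains to choose $B'$ and to lift. I would take $B'$ to be the image of $B-ux$ under the two operations, discarding $yb$ if it lay in $B$. Verifying $\size{B'(2,3)}\le 1$ requires tracking the $2$-vertices newly appearing on the boundary of the outer face, namely $x$ and, when $\degree{G}{b}=2$, the merged vertex $y$; since $ux$ was the unique edge of $B(2,3)$, one checks that at most one new boundary $(2,3)$-edge enters $B'$, so condition~\ref{i:3conn} holds for $G'$. By minimality $G'$ has a $B'$-2D $(F',M')$. Applying Claim~\ref{cl:contract} with $H=G-u$, $r=y$, $s=b$ gives a $(B'+yb)$-2D of $G-u$; I then re-insert $u$, putting the pendant edge $ux$ into the forest (which cannot create a cycle) and placing $uy$ into the forest when $x$ and $y$ lie in different trees and into the matching otherwise. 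The one delicate point is legitimacy of the matching placement, where one must know that $y$ is not already matched — a condition guaranteed because $yb$ already lies in the forest, with the choice of $B'$ (or a local exchange at $y$) handling the remaining incident edge $ya$. Lifting back through the contraction and the deletion yields a $B$-2D of $G$, the desired contradiction, so $G'$ is not cyclically $3$-edge-connected.
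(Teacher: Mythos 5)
Your overall strategy --- assume $G'$ is cyclically $3$-edge-connected, apply the minimality of $G$ to a pair $(G',B')$ under condition~(a), and lift the resulting decomposition back to $G$ --- is the same as the paper's, but the proposal glosses over exactly the two difficulties that constitute the substance of the paper's argument, and as written it does not close. The first gap is in the lifting step. When you re-insert $u$, the edge $ux\in B$ must go to the forest; if $x$ and $y$ lie in the same tree of the lifted forest, $uy$ is forced into the matching, and this is legitimate only if $ya$ lies in the forest part (the edges of $G$ at $y$ are $uy$, $yb$, $ya$, and $yb$ is already in the forest). Your $B'$ (the image of $B-ux$, without $ya$) does not guarantee this, and the ``local exchange at $y$'' you invoke is not available: if $ya\in M'$, moving it into the forest can create a cycle, and breaking that cycle cascades with no obvious termination. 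This is precisely why the paper sets $B'=B-ux+ya$, forcing $ya$ into the forest so that $uy$ can always be placed in the matching.

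The second gap is that once $ya$ is added to $B'$ (and even for your own choice of $B'$), the required inequality $\size{B'(2,3)}\le 1$ can fail: $x$ becomes a $2$-vertex of $G'$, so if $xd\in B$ (which is allowed, since $xd$ lies on the outer boundary and is then a $(3,3)$-edge of $G$) we get $xd\in B'(2,3)$, while $ya$ can simultaneously be a $(2,3)$-edge of $G'$ (for instance when $\degree{G}{a}=2$, or when $\degree{G}{b}=2$ so that the contracted vertex $y$ has degree $2$ in $G'$). Your assertion that ``at most one new boundary $(2,3)$-edge enters $B'$'' is therefore unjustified. Handling the case $B'(2,3)=\Setx{ya,xd}$ is where most of the paper's proof of this claim lives: it splits on $\degree{G}{c}$, either contracting $xc$ into $x$ to destroy the extra $(2,3)$-edge when $\degree{G}{c}=3$, or, when $\degree{G}{c}=2$, dropping $xd$ from $B'$ and recovering $xd$ in the forest afterwards by an explicit exchange of $xd$ with $xc$ along the cycle through the $2$-vertex $c$. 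None of this is present in your proposal, so the proof is genuinely incomplete rather than merely terse.
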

    \begin{claimproof}
      For the sake of a contradiction, suppose that $G'$ is cyclically
      $3$-edge-connected. Let $B' \subseteq E(G')$ with $B'=B-ux+ya$.  Assume first that
      $\size{B'(2,3)} \leq 1$.\\ 
      Using the fact that $G\in\sepf$ and since $x$ is a
      $2$-vertex of the outer face of $G'$, it is
      not difficult to verify that $G'\in\sepf$. It follows that $(G',B')$
      satisfies the conditions of the theorem, so $G'$ admits a
      $B'$-2D by the minimality of $G$. Adding the edges $yb$ and $ux$
      to its forest part and the edge $uy$ to its matching part, we obtain a $B$-2D of $G$, a contradiction.

      Thus, $\size{B'(2,3)} \geq 2$. Since $B(2,3)=\Setx{ux}$, $B'(2,3) = \Setx{ya,xd}$ implying $xd \in B$, and
      since $|B(2,3)|=1$, 
      %$\degree{G'}x = 2$, 
      we have $\degree{G}d = \degree{G'}d = 3$. Furthermore, since $ya \in B'(2,3)$ either $\degree{G}a = 2$ and $\degree{G}b = 3$
      or vice versa.

      We distinguish two cases according to $\degree{G}c$. If $\degree{G} c = 3$, we let $G''$ be the graph obtained from $G'$
      by contracting $xc$ into $x$, and let $B'' = B'$.  Then
      $\size{B''(2,3)} = 1$, $G''\in\sepf$ and $G''$ is cyclically $3$-edge-connected. 
      By the minimality of $G$, $G''$ admits a $B''$-2D. To obtain a $B$-2D of $G$,
      it suffices to add $ux$, $xc$ and $yb$ to the forest part,
	and $uy$ to the matching part, respectively, of the
      $B''$-2D. This contradicts the choice of $G$.

      It remains to discuss the case $\degree G c = 2$. In this
      case, we let $G'' = G'$ and $B'' = B'-xd$ implying
      $\size{B''(2,3)} = 1$. By the minimality of $G$, there is a
      $B''$-2D of $G''$, say $(F'',M'')$, where $F''$ is a forest and
      $M''$ is a matching. Consider the 2-decomposition $(F,M)$ of
      $G$, where $F=F''+yb+ux$ and $M=M''+uy$. We must have                                    
      $xd\notin F$, for otherwise this would be a $B$-2D. In fact, $F+xd$ must
      contain a cycle $Z$. 
      Since $uy\notin F$ and since $\degree G c = 2$, $Z$ contains both edges incident 
      with $c$. It follows that $F+xd-xc$ is acyclic and that
      $(F+xd-xc,M+xc-xd)$ is a $B$-2D of $G$, a contradiction.
    \end{claimproof}

    Let $B' \subseteq E(G')$ and let $B'=B-ux+ya$. We will show that $(G',B')$ satisfies condition (b).            
    Then, by the minimality of $G$, $G'$ will have a $B'$-2D implying a $B$-2D of $G$,
    which will finish Case I. Firstly, $G'$ contains a cyclic 2-edge-cut by
    Claim~\ref{cl:not-3-conn}. Comparing faces of $G'$ to those of
    $G$, we conclude that every face of $G'$ is incident with a
    $2$-vertex. Thus, $G'\in\sepf$. Let $v=x$ and $w=y$. We check conditions (b1)--(b4), starting with
    (b1). Any cyclic $2$-edge-cut of $G'$ not separating $x$ from $y$
    would be a cyclic $2$-edge-cut in $G$, contrary to the assumption
    that $G$ is cyclically $3$-edge-connected. Condition (b2) follows
    from the fact all edges of $B$ are edges of the boundary of the
    outer face of $G$, and all of this boundary (except for the edges
    $ux$ and $uy$) is covered by an $xy$-path in the boundary of the
    outer face of $G'$. As for condition (b3), $x$ is indeed a
    $2$-vertex of $G'$, and since $\degree G x=3$ and $G\in\sepf$, 
    the inner face of $G'$ incident with $x$
    is also incident with some other $2$-vertex. 
    Finally, we consider condition (b4). Since for every vertex
    $z$ of $G'$, $\degree{G'}z = \degree G z$ except if
    $z\in\Setx{x,y}$, and since $B(2,3) = \Setx{ux}$ and all edges in
    $B$ are contained in the boundary of the outer face of $G$, we
    have $B'(2,3)\subseteq\Setx{xd,ya}$. Then condition (b4) follows.
    
    Hence, $G'$ satisfies condition (b) and thus admits a
    $B'$-2D, say $(F',M')$. Then $(F'+ux+yb,M'+uy)$ is a $B$-2D of
    $G$, a contradiction to the choice of $G$ which finishes the
    discussion of Case I.
  \end{xcase}

  \begin{xcase}{II}{$G$ satisfies condition (b) in the theorem}

    Let $C=\Setx{e_1,e_2}$ be a cyclic $2$-edge-cut of $G$ such that
    the component $K_1$ of $G-C$ containing $v$ is inclusionwise
    minimal, i.e., there is no other cyclic $2$-edge-cut $C'$ such
    that the component of $G-C'$ containing $v$ is contained in
    $K_1$. We refer to this property of $C$ as the \emph{minimality}.

    Let $K_2$ be the other component of $G-C$; note that
    $w\in V(K_2)$. For $i=1,2$, let $G_i$ denote the graph obtained
    from $G$ by contracting all edges of $K_{3-i}$. The vertex of $G_i$
    incident with $e_1$ and $e_2$ is denoted by $u_i$. Thus, $G_1$
    contains $v$ and $u_1$, while $G_2$ contains $w$ and $u_2$.

    By property~\ref{i:path}, $B$ is contained in a $vw$-path in the
    boundary of the outer face of $G$; since $C$ separates $v$ from
    $w$, we may henceforth assume that $e_1\notin B$. 
    For $i=1,2$, let $B_i = B \cap E(G_i)$.
    Let $G^*_1$ be the graph obtained from $G_1$ by contracting $e_1$. 

    The following claim will sometimes be used without explicit
    reference:
    \begin{claim}\label{cl:conn}
      The following hold:
      \begin{enumerate}[label=(\roman*)]
      \item the graphs $G_1$, $G_2$ and $G^*_1$ are
        $2$-edge-connected,
      \item the endvertices of $e_1$ and $e_2$ in $G_1$ other than
        $u_1$ have degree $3$, and
      \item the graphs $G_1$, $G^*_1$ are cyclically
        $3$-edge-connected and $G_1 \in \sepf$.
      \end{enumerate}
    \end{claim}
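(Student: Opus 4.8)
The plan is to prove the three parts in the order (i), (ii), (iii), since the later parts depend on the earlier ones and on the interplay between the minimality of $C$ and condition (b1). Throughout, write $e_i = p_i q_i$ with $p_i \in V(K_1)$ and $q_i \in V(K_2)$; as $C$ is a matching we have $p_1 \neq p_2$ and $q_1 \neq q_2$, and in $G_1$ the vertex $u_1$ is adjacent exactly to $p_1$ and $p_2$. Part (i) is the easy one: each of $G_1$ and $G_2$ arises from the $2$-edge-connected graph $G$ by contracting a connected subgraph ($K_2$, respectively $K_1$), and $G^*_1$ arises from $G_1$ by contracting the single edge $e_1$; since contracting a connected subgraph or a single edge of a $2$-edge-connected graph again gives a $2$-edge-connected graph, all three are $2$-edge-connected. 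Moreover, contracting $K_2$ leaves the degree of every vertex of $K_1$ unchanged and makes $u_1$ a $2$-vertex, so $G_1 \in \subc$ (and likewise $G_2$).

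For (ii) the idea is to show that a degree-$2$ endvertex would create a smaller cyclic $2$-edge-cut. Suppose $\degree{G}{p_1} = 2$. Then $p_1$ has exactly one edge $f$ inside $K_1$, so $p_1$ is a leaf of $K_1$ and no cycle of $K_1$ passes through it. Hence replacing $e_1$ by $f$, the set $C' = \Setx{f,e_2}$ separates $\Setx{p_1}\cup V(K_2)$ from $V(K_1)\setminus\Setx{p_1}$, where the first side contains a cycle of $K_2$ and the second contains every cycle of $K_1$; thus $C'$ is a cyclic $2$-edge-cut. If $v \neq p_1$, the $v$-component of $G - C'$ is $V(K_1)\setminus\Setx{p_1}\subsetneq V(K_1)$, contradicting the minimality of $C$; if $v = p_1$, then $v$ and $w$ both lie in $\Setx{p_1}\cup V(K_2)$, contradicting (b1). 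Therefore $\degree{G}{p_1}=3$, and the symmetric argument with the roles of $e_1$ and $e_2$ exchanged gives $\degree{G}{p_2}=3$.

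For (iii) the core step will be that $G_1$ is cyclically $3$-edge-connected, and this again turns on the same dichotomy. Suppose $G_1$ has a cyclic $2$-edge-cut $D$ with components $L_1, L_2$ of $G_1 - D$, where $u_1 \in L_2$; since $L_2$ contains a cycle it contains a vertex other than $u_1$, so $L_1 \subsetneq V(K_1)$. Reinstating $K_2$ in place of $u_1$ turns $D$ into a $2$-edge-cut $\widehat{D}$ of $G$ separating $L_1$ from $(L_2\setminus\Setx{u_1})\cup V(K_2)$; both sides still contain cycles, so $\widehat{D}$ is cyclic. Now either $v \in L_1$, in which case the $v$-component of $G - \widehat{D}$ equals $L_1 \subsetneq V(K_1)$, contradicting the minimality of $C$; or $v \in L_2$, in which case $v$ lies on the same side as $w \in V(K_2)$, so $\widehat{D}$ does not separate $v$ from $w$, contradicting (b1). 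This proves $G_1$ cyclically $3$-edge-connected. For $G^*_1$, any cyclic $2$-edge-cut would lift, upon uncontracting $e_1$ (which only places both ends of $e_1$ on one side), to a cyclic $2$-edge-cut of $G_1$, which is impossible; here (ii) guarantees that the merged vertex has degree $3$, so $G^*_1 \in \subc$. Finally, $G_1 \in \sepf$ follows by comparing faces: every face of $G_1$ is either a face of $G$ lying in the $K_1$-region, which keeps its $2$-vertex, or one of the two faces incident with the $2$-vertex $u_1$; thus every face of $G_1$ is incident with a $2$-vertex, and Lemma~\ref{l:face-two} yields $G_1 \in \sepf$.

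The main obstacle will be the cyclic $3$-edge-connectivity of $G_1$: it requires translating a hypothetical cyclic $2$-edge-cut of the contracted graph $G_1$ back into a genuine cyclic $2$-edge-cut of $G$, and then choosing — according to which side of the cut $v$ lands on — between the minimality of $C$ and condition (b1). The degree statement (ii) rests on exactly the same two-way argument. The remaining verifications, namely preservation of $2$-edge-connectivity under contraction and the face comparison giving $G_1 \in \sepf$, are routine.
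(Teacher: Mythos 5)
Your proof is correct and follows essentially the same approach as the paper's (much terser) argument: part (i) via preservation of $2$-edge-connectivity under contraction, parts (ii) and (iii) via the dichotomy between the minimality of $C$ and condition (b1) according to which side of the new cut $v$ falls on, and $G_1\in\sepf$ by comparing faces of $G_1$ with those of $G$. You supply details the paper omits --- notably the explicit cut $\Setx{f,e_2}$ in (ii), the case $v=p_1$ handled by (b1), and the lifting argument for $G^*_1$ --- but the underlying ideas coincide.
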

    \begin{claimproof}
      Part (i) follows from the fact that edge contraction preserves
      the property of being $2$-edge-connected. Part (ii) is a
      consequence of the minimality of $C$. 
      %the choice of $C$ as a cyclic edge-cut such that
      %the component of $G-C$ containing $v$ is minimal. 
      Part (iii): suppose 
      by contradiction that $G_1$ has a cyclic $2$-edge-cut $C_1$. Then $C_1$ 
      does not separate $v$ from $u_1$ by the minimality of $C$.
      Hence one component of $G_1-C_1$ contains $v$ and $u_1$. Thus, $C_1$ in $G$ does 
      not separate $v$ from $w$, which contradicts (b1). Finally, $G_1 \in \sepf$ follows from
      the fact that $G \in \sepf$.
      \end{claimproof}

      Note that $G^*_1 \not\in \sepf$ if the inner facial cycle of $G_1$ containing $u_1$ has no other $2$-vertex.
      Then by Lemma~\ref{l:face-two}, even $G^*_1 \not\in \sep$ holds.

    \begin{claim}\label{cl:right}
      The following hold:
      \begin{enumerate}[label=(\roman*)]
      \item The graph $G_1$ admits a $(B_1+e_2)$-2D.
      \item If $G^*_1\in\sep$, then $G_1$ admits a $(B_1+e_1+e_2)$-2D.
      \end{enumerate}
    \end{claim}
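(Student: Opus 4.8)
The plan is to derive both parts from the minimality of $G$ applied to graphs obtained from $G_1$ by contraction, after first pinning down the $(2,3)$-edges of $B_1$. By Claim~\ref{cl:conn}(ii), $u_1$ is a $2$-vertex of $G_1$ whose two incident edges $e_1,e_2$ are $(2,3)$-edges. I would first observe that $v$ is the only vertex of $K_1$ that can be sensitive: any other sensitive vertex $s$ would, by condition~\ref{i:two}, equal $w$ or be adjacent to $w$, but $w\in V(K_2)$ and the only $K_1$-endvertices of edges of $C$ have degree $3$ and so are not sensitive $2$-vertices. Since $B$ lies on a $vw$-path with endvertex $v$ (condition~\ref{i:path}), at most one edge of $B$ meets $v$; hence $B_1$ has at most one $(2,3)$-edge inside $K_1$, which I call $f_v$ when it exists (so $v$ is sensitive precisely then). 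As $e_1\notin B$, every $(2,3)$-edge of $B_1$ is $f_v$ or $e_2$.

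For part (ii) I would contract $e_1$ into $u_1$, giving $G^*_1=G_1/e_1$, in which $e_2$ becomes a $(3,3)$-edge and the image of $u_1$ has degree $3$, so $G^*_1$ is not a cycle. By Claim~\ref{cl:conn}(iii), $G^*_1$ is cyclically $3$-edge-connected, and the hypothesis $G^*_1\in\sep$ gives $G^*_1\in\sepf$ by Lemma~\ref{l:face-two}. With $B'=B_1+e_2$ the only possible $(2,3)$-edge is $f_v$, so $\size{B'(2,3)}\leq 1$, and (after checking that $B'$ stays on the boundary of the outer face) $(G^*_1,B')$ satisfies condition~\ref{i:3conn}. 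Minimality of $G$ yields a $B'$-2D of $G^*_1$, which Claim~\ref{cl:contract} lifts to a $(B'+e_1)$-2D, i.e.\ a $(B_1+e_1+e_2)$-2D, of $G_1$.

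For part (i) I would distinguish three situations. If $v$ is not sensitive, then $f_v$ does not exist and $B'=B_1+e_2$ has $B'(2,3)=\Setx{e_2}$; since $G_1$ is cyclically $3$-edge-connected and lies in $\sepf$ (Claim~\ref{cl:conn}(iii)), condition~\ref{i:3conn} and the minimality of $G$ give a $(B_1+e_2)$-2D directly. If $v$ is sensitive and $G^*_1\in\sep$, the inner face at $u_1$ has a second $2$-vertex, so contracting $e_2$ yields a graph $G_1/e_2\in\sepf$ in which $e_1$ becomes a $(3,3)$-edge; here $f_v$ is the only $(2,3)$-edge to be forced, and recursing and applying Claim~\ref{cl:contract} again produces a $(B_1+e_2)$-2D. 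Finally, if $v$ is sensitive and $G^*_1\notin\sep$, I would apply condition~\ref{i:3conn} to $G_1$ with $B''=(B_1-f_v)+e_2$, for which $\size{B''(2,3)}=1$, obtaining a $B''$-2D $(F,M)$ with $e_2\in F$ and $B_1-f_v\subseteq F$. If $f_v\in F$ we are done; otherwise $f_v\in M$, and I would reinsert $f_v$ into the forest by an alternating exchange along the inner face $F_v$ of $G_1$ incident with $v$. By condition~\ref{i:another} this face carries a second $2$-vertex; and since $G^*_1\notin\sep$ means the inner face at $u_1$ has no $2$-vertex besides $u_1$, we get $u_1\notin F_v$, so the second $2$-vertex lies in $K_1$ and differs from $u_1$, which lets the exchange close up without disturbing $e_2$.

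I expect this last exchange to be the main obstacle: one must show it can be routed along $F_v$ to the spare $2$-vertex, that it keeps $F$ acyclic and $M$ a matching at every step, and that it touches neither $e_2$ nor the edges of $B_1-f_v$ already in $F$ (the latter is safe because the second edge at $v$ is not in $B$). The remaining work is routine planarity bookkeeping---that contracting $K_2$ and then a cut edge leaves the relevant edges on the outer boundary and preserves each face's incidence with a $2$-vertex---which follows from $G\in\sepf$ and from $v,w$ lying on $F_0$.
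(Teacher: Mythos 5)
Your part (ii) and the non-sensitive case of part (i) match the paper's argument. The gap is in part (i) when $v$ is sensitive and $G^*_1\notin\sep$. There you propose to force only $B''=(B_1-f_v)+e_2$ into the forest and then ``reinsert $f_v$ by an alternating exchange along the inner face at $v$.'' This is not a proof, and the obvious single swap does not work: if $f_v\in M$ and $F+f_v$ contains a cycle, that cycle passes through the other edge $g$ at the $2$-vertex $v$, and while $F+f_v-g$ is indeed acyclic, $M-f_v+g$ need not be a matching, because the endvertex of $g$ other than $v$ may already be saturated by $M$. Iterating swaps along the face boundary has no guarantee of terminating in a valid $2$-decomposition, especially under your additional constraints of never removing $e_2$ or the edges of $B_1-f_v$ from the forest; the unique cycle of $F+f_v$ need not follow the boundary of that face at all. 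You identify this as ``the main obstacle'' but do not resolve it.

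The paper avoids the difficulty entirely: when $v$ is sensitive, it contracts the sensitive edge $vv'$ \emph{into $v$} and applies condition (a) to $(G_1/vv',\,B_1+e_2-vv')$. Property~\ref{i:another} is exactly what makes this legal --- the inner face at $v$ has another $2$-vertex, so the contracted graph is still in $\sepf$ --- and this holds regardless of whether $G^*_1\in\sep$, so no case split on $G^*_1$ is needed in part (i). Claim~\ref{cl:contract} then returns $vv'$ to the forest for free. Your intermediate case (contracting $e_2$ when $G^*_1\in\sep$) is a workable but unnecessary detour; the missing idea is that the same contraction trick you use elsewhere applies to $vv'$ itself, which is precisely what hypothesis~\ref{i:another} was designed for. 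Your preliminary analysis of which edges of $B_1$ can be $(2,3)$-edges (only $f_v$ and $e_2$, via~\ref{i:two} and Claim~\ref{cl:conn}(ii)) is correct and matches the paper.
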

    \begin{claimproof}

      (i) If $v$ is not sensitive, then the desired decomposition is
      easy to obtain by noting that the pair $(G_1,B_1+e_2)$ satisfies
      condition~\ref{i:3conn} in the theorem. Suppose thus that $v$ is
      sensitive, and let $v'$ be the unique neighbour of $v$ in $G_1$
      such that $vv'\in B_1(2,3)$. Let $G'_1$ be obtained from $G_1$
      by contracting $vv'$ into $v$. Then $G'_1\in\sep$ thanks to
      property~\ref{i:another} of $G$, $G'_1$ is cyclically
      3-edge-connected and $B_1+e_2$ contains at most one
      $(2,3)$-edge, so condition~\ref{i:3conn} is satisfied for
      $(G'_1,B_1+e_2 - vv')$. Consequently, there is a
      $(B_1+e_2-vv')$-2D of
      $G'_1$. By Claim~\ref{cl:contract}, $G_1$ admits a
      $(B_1+e_2)$-2D.

      (ii) Suppose that $G^*_1\in\sep$ and consider the set of edges
      $B^*_1 = B_1 + e_2$ in $G^*_1$. (Note that $e_2$ is an edge of
      $G^*_1$ while $e_1$ has been contracted in its construction.)
      By Claim~\ref{cl:conn}(iii) and Lemma~\ref{l:face-two},
      $G^*_1\in\sepf$. By property~\ref{i:two} and the fact that $e_2$
      is a $(3,3)$-edge in $G^*_1$, any \ttedge in $B^*_1$ is incident
      with $v$. By property~\ref{i:path}, there is at most one such
      edge. Thus, the pair $(G^*_1,B^*_1)$ satisfies condition (a),
      and consequently $G^*_1$ admits a $B^*_1$-2D by the minimality
      of $G$. By Claim~\ref{cl:contract}, $G_1$ admits a
      $(B_1+e_1+e_2)$-2D.
    \end{claimproof}

    \begin{claim}\label{cl:left}
      The following hold:
      \begin{enumerate}[label=(\roman*)]
      \item The graph $G_2$ admits a $(B_2-e_2)$-2D.
      \item If $G^*_1\notin\sep$, then $G_2$ admits a $(B_2+e_2)$-2D.
      \end{enumerate}
    \end{claim}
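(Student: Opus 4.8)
```latex
The plan is to handle the graph $G_2$ in a manner symmetric to the treatment of $G_1$ in Claim~\ref{cl:right}, but now using the structural information provided by conditions \ref{i:sep}, \ref{i:path}, \ref{i:two} relative to the vertex $w$ rather than $v$. The key point is that $G_2$ is again $2$-edge-connected (by Claim~\ref{cl:conn}(i)) and lies in $\sepf$ (inherited from $G\in\sepf$, checking that faces of $G_2$ are incident with $2$-vertices by comparing to faces of $G$). Whether $G_2$ is cyclically $3$-edge-connected is not guaranteed, so I expect both parts of this claim to require distinguishing whether $G_2$ satisfies condition \ref{i:3conn} or \ref{i:n3conn} of the theorem, and then invoking the minimality of $G$ (since $\size{V(G_2)} < \size{V(G)}$, as $K_1$ is nonempty and has been contracted to the single vertex $u_2$).

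For part (i), I first observe that $u_2$ is a $2$-vertex of $G_2$ incident with $e_2$, which is a $(3,3)$-edge or $(2,3)$-edge depending on the degree of the far endvertex; in forming $B_2 - e_2$ I remove $e_2$ from consideration entirely. The relevant sensitive vertices of $(G_2, B_2-e_2)$ are those coming from $B(2,3)$ on the $w$-side, and by property~\ref{i:two} every such sensitive vertex distinct from $v$ is either $w$ or adjacent to $w$. Since $v \in V(K_1)$ does not survive in $G_2$, all surviving sensitive vertices are adjacent to $w$ or equal to $w$. Taking the role of the designated $2$-vertex in condition (b) to be $u_2$ (or verifying condition (a) if $G_2$ happens to be cyclically $3$-edge-connected), I would check \ref{i:sep}--\ref{i:two} for the pair $(G_2, B_2-e_2)$ with the outer-face path running from $u_2$ to $w$. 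The subtle verification is \ref{i:another}: I must ensure that if $u_2$ is sensitive then its incident inner face carries another $2$-vertex, which should follow from $G\in\sepf$ together with the structure near the cut $C$.

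For part (ii), the extra hypothesis $G^*_1\notin\sep$ is used to force $e_2$ into the forest part of the $G_2$-decomposition, so that the decompositions of $G_1$ and $G_2$ can later be glued along $C$. By Lemma~\ref{l:face-two} and the remark following Claim~\ref{cl:conn}, $G^*_1\notin\sep$ means the inner facial cycle of $G_1$ through $u_1$ has no $2$-vertex other than possibly $u_1$; translating this across the cut, the corresponding inner face of $G_2$ incident with $u_2$ then inherits a $2$-vertex forcing $e_2$ to behave well, so that requiring $e_2$ in the forest part is compatible with condition (b) (now including $e_2$ in the boundary path and in $B_2$). I would again verify conditions \ref{i:sep}--\ref{i:two} for $(G_2, B_2+e_2)$ and apply minimality of $G$ to obtain the decomposition.

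The main obstacle I anticipate is the bookkeeping for the sensitive vertices and the correct choice of the pair $(v,w)$ for $G_2$: one must confirm that condition \ref{i:two} genuinely transfers, i.e.\ that contracting $K_1$ does not create a new sensitive vertex violating the adjacency-to-$w$ requirement, and that the edge $e_2$ (whose incidence with a $2$-vertex may have changed under contraction) does not spuriously become a forbidden $(2,3)$-edge in $B_2$. Handling the interaction between \ref{i:another} at $u_2$ and the $\sep$ versus $\notin\sep$ dichotomy of $G^*_1$ is the delicate part, since that dichotomy is precisely what decides whether we push $e_2$ into the forest (part (ii)) or leave it out (part (i)).
```
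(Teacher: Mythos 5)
Your overall strategy matches the paper's: apply the induction hypothesis to $(G_2,B_2-e_2)$ resp.\ $(G_2,B_2+e_2)$ with $u_2$ in the role of $v$, and in part (ii) use $G^*_1\notin\sep$ to locate a $2$-vertex $z$ of the inner face $Q$ of $G$ containing $C$ on the $K_2$-side, which then certifies property~\ref{i:another} at $u_2$. One remark on part (i): your worry about \ref{i:another} at $u_2$ is moot, since the only edges of $G_2$ incident with $u_2$ are $e_1\notin B$ and $e_2$, which you have removed from $B_2-e_2$; hence $u_2$ is not sensitive and \ref{i:another} is vacuous --- this is exactly the paper's parenthetical observation, and that part of your argument goes through (including the subcase where $G_2$ is cyclically $3$-edge-connected, which you correctly route through condition~\ref{i:3conn} using \ref{i:path} and \ref{i:two}).

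The genuine gap is in part (ii), in the case where $G_2$ is cyclically $3$-edge-connected. You propose only to verify conditions \ref{i:sep}--\ref{i:two} for $(G_2,B_2+e_2)$, but condition~\ref{i:n3conn} is available only when $G_2$ \emph{has} a cyclic $2$-edge-cut. If $G_2$ is cyclically $3$-edge-connected you must instead use condition~\ref{i:3conn}, which requires $\size{(B_2+e_2)(2,3)}\le 1$; this can fail, because $e_2$ is incident with the $2$-vertex $u_2$ (so it is a $(2,3)$-edge of $G_2$) and $B_2$ may contain a second $(2,3)$-edge whose sensitive vertex $z'$ is $w$ or adjacent to $w$, which is explicitly permitted by \ref{i:two}. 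In that situation neither (a) nor (b) applies directly to $(G_2,B_2+e_2)$. The paper resolves this with an extra contraction step that your proposal is missing: form $G^*_2$ from $G_2$ by contracting $e_2$ into $u_2$, verify $G^*_2\in\sepf$ (using the $2$-vertices $z$ and $z'$ to show the faces affected by the contraction still carry $2$-vertices), apply condition~\ref{i:3conn} to $(G^*_2,B_2-e_2)$ by the minimality of $G$, and then lift the resulting $2$-decomposition back to a $(B_2+e_2)$-2D of $G_2$ via Claim~\ref{cl:contract}. Without this step part (ii) is incomplete.
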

    \begin{claimproof}
      (i) Suppose first that $G_2$ contains at least one cyclic
      $2$-edge-cut. Since $G_2$ arises by contracting all edges of
      $K_1$ `into' the vertex $u_2$, it is straightforward to check
      that the pair $(G_2,B_2-e_2)$ satisfies condition~\ref{i:n3conn}
      in the theorem with $u_2$ playing the role of $v$. (In relation
      to property~\ref{i:another}, note that $u_2$ is not sensitive
      with respect to $B_2-e_2$.) Thus, a $(B_2-e_2)$-2D of $G_2$
      exists by the minimality of $G$.

      If $G_2$ is cyclically $3$-edge-connected, then by
      properties~\ref{i:path} and \ref{i:two} of $(G,B)$, $B_2-e_2$
      contains at most one \ttedge (incident with $w$ if such an edge
      exists). Therefore, $(G_2,B_2-e_2)$ satisfies condition~\ref{i:3conn} in
      the theorem. The minimality of $G$ implies that $G_2$ has a $(B_2-e_2)$-2D.
      
      (ii) Let us consider possible reasons why
      $G^*_1\notin\sep$. Since $\sepf\subseteq\sep$, there is a face
      of $G^*_1$ not incident with a 2-vertex. 
      Since $G_1 \in \sepf$ (Claim \ref{cl:conn} (iii)) and since
      %In view of 
      the 2-vertex $v$ is contained in the outer face of $G^*_1$, 
      %this face must be
      there is only one such face, namely
      the inner face whose boundary contains $e_2$. Let $Q$ be the
      inner face of $G$ whose boundary contains the edge-cut
      $C$. Since $G\in\sepf$, $Q$ is incident with a 2-vertex $z$.
      Since $G^*_1 \not\in \sepf$, 
      %In
      %$G_2$,
      $z$ and $u_2$ are both incident with the same inner face in $G_2$.

      Suppose first that $G_2$ contains a cyclic $2$-edge-cut. The
      existence of the vertex $z$ proves property~\ref{i:another} for
      the pair $(G_2,B_2+e_2)$ with $u_2$ playing the role of $v$ (note that $u_2$ is sensitive). The
      other parts of condition~\ref{i:n3conn} are straightforward to
      check. By the minimality of $G$, the desired $(B_2+e_2)$-2D of
      $G_2$ exists.

      It remains to consider that $G_2$ is cyclically
      $3$-edge-connected. 
      If $e_2$ is the unique $(2,3)$-edge in $B_2 + e_2$, then $(G_2, B_2 + e_2)$ satisfies condition (a) in the theorem, 
      and hence the minimality of $G$ implies that $G_2$ admits a $(B_2+e_2)$-2D. 
      Therefore, we may assume that there is another $(2,3)$-edge in $B_2 + e_2$, and in particular, there is a 
      sensitive vertex $z'$ incident with the outer face of $G_2$ with $z' \not= u_2$. 
      By condition (b4), $z'$ has to be either $w$ or a vertex adjacent to $w$.
      Let $G^*_2$ be obtained from $G_2$ by
      contracting $e_2$ into $u_2$. Since $G \in \sepf$ and since
      $z$ and $z'$ are $2$-vertices, $G^*_2\in \sepf$. Hence the pair $(G^*_2,B_2-e_2)$
      satisfies condition~\ref{i:3conn} of the theorem. By the
      minimality of $G$, there is a $(B_2-e_2)$-2D of
      $G^*_2$. Claim~\ref{cl:contract} implies a
      $(B_2+e_2)$-2D of $G_2$.
    \end{claimproof}

    By the above claims we obtain the sought
    contradiction. Suppose first that $G^*_1\in\sep$. By
    Claims~\ref{cl:right}(ii) and \ref{cl:left}(i), there is a
    $(B_1+e_1+e_2)$-2D $(F_1,M_1)$ of $G_1$ and a $(B_2-e_2)$-2D
    $(F_2,M_2)$ of $G_2$. Since $e_1,e_2\in E(F_1)$, $F_1\cup F_2$ is
    acyclic, regardless of whether $e_1,e_2\in E(F_2)$. Clearly, $M_1\cup (M_2 - e_1 - e_2)$ is a matching in
    $G$, so we obtain a $B$-2D of $G$, contradicting the choice of
    $G$.
    
    Thus, $G^*_1\notin\sep$. By Claims~\ref{cl:right}(i) and
    \ref{cl:left}(ii), there exists a $(B_1+e_2)$-2D $(F'_1,M'_1)$ of
    $G_1$ and a $(B_2+e_2)$-2D $(F'_2,M'_2)$ of $G_2$. Since $e_2$ is
    contained in both $F'_1$ and $F'_2$, the $2$-decompositions
    combined produce a $B$-2D $(F'_1\cup F'_2, M'_1\cup M'_2)$ if
  $e_1 \not\in E(F_1 \cup F_2)$, or
    $(F'_1\cup F'_2, M'_1\cup M'_2 - e_1)$ if
  $e_1 \in E(F_1 \cup F_2)$, a contradiction.
  \end{xcase}  
\end{proof}

\begin{cor}\label{sep}
  If $G \in \sep$ is $2$-edge-connected and $e \in E(G)$ is a
  $(2,3)$-edge, then $G$ admits an $\Setx{e}$-2D.
\end{cor}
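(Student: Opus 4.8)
The plan is to derive the statement from Theorem~\ref{t:induction} by induction on $\size{V(G)}$. Write $e=ux$ with $u\in V_2(G)$ and $x\in V_3(G)$, and take the outer face $F_0$ to be one of the two faces incident with $e$ (for a plane graph any face may serve as the outer one, so this is free and puts $e$ on the boundary of $F_0$, as the edge set $B=\Setx e$ requires). Observe that $G$ is not a cycle, since $x$ is a $3$-vertex, that $\size{B(2,3)}=1$, and that $u$ is the unique sensitive vertex.

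The cyclically $3$-edge-connected case is immediate: Lemma~\ref{l:face-two} gives $G\in\sepf$, so $(G,\Setx e)$ meets hypothesis~\ref{i:3conn} of Theorem~\ref{t:induction}, and the theorem delivers the required $\Setx e$-$2$D. So I may assume $G$ has a cyclic $2$-edge-cut, and I would reduce along one. Choosing a cyclic $2$-edge-cut $C=\Setx{e_1,e_2}$ whose side $K_2$ not meeting $e$ is inclusion-minimal, one verifies (as in Claim~\ref{cl:conn}) that $e\notin C$, that $e$ lies in $K_1$, and that contracting $K_2$, respectively $K_1$, into a single $2$-vertex yields $2$-edge-connected graphs $G_1,G_2\in\sep$ on fewer vertices; here $G_1$ still carries the $(2,3)$-edge $e$ and $G_2$ carries a $(2,3)$-edge at its contracted vertex. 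If vertices $v,w$ witnessing condition~\ref{i:n3conn} can be found for $(G,\Setx e)$, Theorem~\ref{t:induction} applies to $G$ at once; otherwise I apply the induction hypothesis to $G_1$ and to $G_2$ and glue the two decompositions across $C$ exactly as in the final paragraph of Case~II of the proof of Theorem~\ref{t:induction}, keeping both edges of $C$ in the forest part on one side so that the union of forest parts stays acyclic and the union of matching parts, after possibly deleting one edge of $C$, stays a matching.

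The step I expect to be the real obstacle is this cyclic $2$-edge-cut case, for two reasons. First, after contracting a side of $C$ one must confirm that the contracted graph is still in $\sep$ and that the new $2$-vertex yields a genuine $(2,3)$-edge to induct on; this is precisely where the gap between $\sep$ and $\sepf$—inner faces that can lose their only $2$-vertex under contraction—must be controlled, just as Claims~\ref{cl:right} and~\ref{cl:left} control it inside the theorem via the auxiliary graph $G^*_1$. Second, the recombination across $C$ must be checked to create neither a cycle in the combined forest nor a conflict in the combined matching; forcing both edges of $C$ into the forest on one side is what makes this routine. By contrast, the choice of $F_0$, the bookkeeping $\size{B(2,3)}=1$, and the cyclically $3$-edge-connected base case are all straightforward.
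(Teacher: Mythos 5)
Your overall skeleton --- induction on the order, re-embedding so that $e$ lies on the outer face, and settling the cyclically $3$-edge-connected case via Lemma~\ref{l:face-two} and condition~\ref{i:3conn} of Theorem~\ref{t:induction} --- is exactly the paper's. The gap is in the inductive step across a cyclic $2$-edge-cut $C=\Setx{e_1,e_2}$. You propose to ``keep both edges of $C$ in the forest part on one side'', but your only tool there is the induction hypothesis, i.e.\ the corollary itself, which lets you prescribe a \emph{single} $(2,3)$-edge into the forest. Forcing $e_1$ and $e_2$ simultaneously into $F_1$ would amount to a $\Setx{e,e_1,e_2}$-2D of $G_1$; obtaining such a decomposition is precisely the content of the much more delicate Claim~\ref{cl:right}(ii) inside the proof of Theorem~\ref{t:induction}, it requires $G^*_1\in\sep$, and it can genuinely fail (which is why Claim~\ref{cl:left}(ii) exists at all). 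So, as written, your recombination step is not justified by anything you have available. The paper closes the induction with a small observation you are missing: apply the hypothesis to $G_1$ with $e$ prescribed; since the contracted vertex $u_1$ is a $2$-vertex and $M_1$ is a matching, at least one of $e_1,e_2$ --- say $e_1$ --- must lie in $F_1$; now apply the hypothesis to $G_2$ prescribing $e_1$. Having one edge of $C$ in both forests already makes $F_1\cup F_2$ acyclic (a cycle crossing $C$ would have to use both $e_1$ and $e_2$ and would then restrict to a cycle inside $F_1$ or inside $F_2$), and one deletes $e_2$ from the combined matching part if it also occurs in a forest part.

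Two smaller points. Your assertion that $e\notin C$ can be arranged is false in general (every cyclic $2$-edge-cut may contain $e$); the paper treats $e\in C$ as a separate, easy case by taking $e=e_1$ and prescribing it on both sides. And your fallback of applying condition~\ref{i:n3conn} of Theorem~\ref{t:induction} directly to $G$ is unavailable, since a graph in $\sep$ with a cyclic $2$-edge-cut need not lie in $\sepf$, which that theorem requires.
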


\begin{proof}
  We proceed by induction on the order of $G$. By choosing a suitable
  embedding of $G$, we may assume that $e$ is contained in the
  boundary of the outer face. If $G$ is cyclically $3$-edge-connected,
  then $G\in\sepf$ by Lemma~\ref{l:face-two}, and the existence of a
  2-decomposition follows from Theorem~\ref{t:induction} (with
  $B = \Setx{e}$). Hence, we assume that $G$ contains a cyclic
  $2$-edge-cut $C = \Setx{e_1,e_2}$. Let $K_1$ and $K_2$ be the
  components of $G - C$.  Just as in Case II of the proof of
  Theorem~\ref{t:induction}, we contract all edges in $K_1$ or $K_2$
  to obtain the smaller graphs $G_1$ and $G_2$ with new vertices $u_1$
  and $u_2$, respectively. Note that $G_i \in \sep$, $i=1,2$. We may assume that 
  $e$ is contained in $G_1$.

  By induction, there is an $\Setx{e}$-2D $(F_1, M_1)$ of $G_1$. 
  First, suppose that $e \not\in \Setx{e_1,e_2}$. 
  Since $M_1$ is a matching, we may assume that $e_1 \in
  E(F_1)$. Again by induction, there is an $\Setx{e_1}$-2D
  $(F_2, M_2)$ of $G_2$.  Since each of $F_1$ and $F_2$ contains
  $e_1$, $G$ has an $\Setx{e}$-2D $(F_1 \cup F_2, M_1 \cup M_2)$ if
  $e_2 \not\in E(F_1 \cup F_2)$ and an $\Setx{e}$-2D
  $(F_1 \cup F_2, M_1 \cup M_2 - e_2)$ if $e_2 \in E(F_1 \cup F_2)$.
  
  In the remaining case that $e \in \Setx{e_1,e_2}$, we assume without
  loss of generality that $e=e_1$ and proceed as above.
  
\end{proof}

Recall that a 2-decomposition of a connected graph implies a decomposition into 
a spanning tree and a matching. 

Theorem~\ref{main} now follows by induction: since Theorem~\ref{main}
holds for cycles and Corollary~\ref{sep} implies the
$2$-edge-connected case, it remains to show that every graph $G$
satisfying the conditions of the theorem, with a bridge $e$, has a
2-decomposition. By combining 2-decompositions of the components of
$G-e$ (found by induction), we obtain an $\Setx{e}$-2D of $G$, which
completes the proof of Theorem~\ref{main}.
 
%As mentioned in Section~\ref{sec:intro}, Theorem~\ref{main} implies via Corollary~\ref{imply}
%that the plane-3DC holds. In fact, we prove
%a more general version for subcubic graphs. 
%Note that in the corollary
%below, the $2$-regular subgraph or the matching may be empty.

\begin{cor}\label{imply}
  Every connected subcubic plane graph can be decomposed into a
  spanning tree, a $2$-regular subgraph and a matching.
\end{cor}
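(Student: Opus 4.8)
The plan is to prove Corollary~\ref{imply} by induction on $\size{E(G)}$, removing one non-separating cycle at a time and handing the remainder to Theorem~\ref{main}. For a connected subcubic plane graph $G$, the base case $\size{E(G)}=0$ is a single vertex, decomposed trivially with an empty $2$-regular subgraph and empty matching. In the inductive step I would first dispose of leaves: if $G$ has a vertex $v$ with $\degree G v = 1$ and incident edge $e=vu$, then $G-v$ is again a connected subcubic plane graph with fewer edges, so by induction it decomposes as $T'\cup R'\cup M'$; since $v$ is a leaf, $T'+e$ is a spanning tree of $G$ and $(T'+e)\cup R'\cup M'$ is the desired decomposition. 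Having handled this case inductively, we may assume that $G$ has minimum degree at least $2$, so that $G\in\subc$.

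Now I split on whether $G$ lies in $\sep$. If every cycle of $G$ is separating, then $G\in\sep$, and Theorem~\ref{main} gives a decomposition of $G$ into a spanning tree and a matching; taking the $2$-regular subgraph to be empty finishes this case. Otherwise $G$ has a non-separating cycle $C$, so that $H:=G-E(C)$ is connected. The key observation is that $C$ cannot pass through any $2$-vertex: if $v\in V(C)$ had $\degree G v=2$, both edges at $v$ would lie on $C$ and $v$ would become isolated in the connected graph $H$, which is impossible once $\size{V(G)}\ge 2$. Hence every vertex of $C$ has degree $3$ in $G$ and therefore degree $1$ in $H$.

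Since $H$ is a connected subcubic plane graph with strictly fewer edges, induction yields a decomposition $H=T\cup R_H\cup M$ into a spanning tree, a $2$-regular subgraph, and a matching. I then set $R:=R_H\cup C$. To see that $R$ is again $2$-regular I would argue that $C$ and $R_H$ are vertex-disjoint: each vertex of $C$ has degree $1$ in $H$, whereas every vertex of $R_H$ has degree $2$ in $R_H$ and hence degree at least $2$ in $H$, so no vertex of $C$ lies in $R_H$. Consequently $R_H\cup C$ is a disjoint union of cycles; moreover $T$ is still a spanning tree of $G$, since $H$ and $G$ have the same vertex set, and $E(T)\cup E(R)\cup E(M)$ partitions $E(G)=E(H)\cup E(C)$, giving the required decomposition.

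The main obstacle I anticipate is precisely the verification that the $2$-regular part remains $2$-regular after reattaching $C$; this hinges on the two pieces of degree bookkeeping above, namely that a non-separating cycle avoids $2$-vertices and that the degree-$1$ vertices it creates cannot be reused by the inductively produced $2$-regular subgraph. Everything else — connectivity of $H$ and of $G-v$, the spanning property of the trees, and the fact that deleting edges of a plane graph leaves a plane graph — is routine, so the argument is short once these observations are in place.
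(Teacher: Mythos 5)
Your proof is correct and is essentially the paper's argument: both strip away non-separating cycles (the paper removes a maximal disjoint collection in a single extremal step, you peel them off one at a time by induction), apply Theorem~\ref{main} to the residual graph in which every cycle is separating, and return the removed cycles as the $2$-regular part, using the same degree bookkeeping that vertices of a removed non-separating cycle drop to degree $1$ and hence cannot meet the $2$-regular subgraph of the remainder. Your preliminary leaf-removal step is harmless but unnecessary, since Theorem~\ref{main} applies to all connected subcubic plane graphs with every cycle separating, including those with vertices of degree~$1$.
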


\begin{proof}
  Let $G$ be a connected subcubic plane graph and let
  $\Setx{C_1,\dots,C_k}$ be a maximal collection of disjoint cycles
  such that $G':=G-\bigcup_{i=1}^k E(C_i)$ is connected. Thus, $G'$ is
  a connected subcubic plane graph in which every cycle is separating,
  so $G'$ is decomposed into a spanning tree and a matching by
  Theorem~\ref{main}. Adding the union of $C_1,\dots,C_k$, we obtain
  the desired decomposition of $G$.
\end{proof}

%Since the $2$-regular subgraph in
%Finally, Corollary \ref{imply} 
%is 
%obviously
%not empty if the subcubic graph is cubic, we finally obtain 
%implies Corollary \ref{imply2}.

Finally, for the sake of completeness, we prove the following statement.

\begin{prop}\label{p:2D3D}
The 3DC and the 2DC are equivalent conjectures.
\end{prop}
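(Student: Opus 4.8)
The plan is to prove the two implications separately. Throughout, for a connected graph I freely identify a decomposition into a spanning tree and a matching with a $2$-decomposition (a forest plus a matching), since the forest part of the latter can always be completed to a spanning tree by moving suitable matching edges into it.

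For the implication $\text{2DC}\Rightarrow\text{3DC}$ I would reuse the cycle-removal strategy of Corollaries~\ref{imply} and~\ref{imply2}, but feed its output into the 2DC rather than into a planarity-based theorem. Given a connected cubic graph $G$, first choose a maximal collection $\Setx{C_1,\dots,C_k}$ of edge-disjoint cycles with $G':=G-\bigcup_{i=1}^{k} E(C_i)$ connected. This collection is nonempty because a connected cubic graph always contains a non-separating cycle (one whose edges can be deleted while keeping the graph connected), so $R:=\bigcup_{i=1}^{k} C_i$ is a nonempty $2$-regular subgraph, and by maximality every cycle of $G'$ is separating. The graph $G'$ is subcubic but may contain $1$-vertices; each lies on a pendant edge, i.e.\ a bridge on no cycle, which is forced into the forest part, so I would record these bridges and delete the $1$-vertices one by one. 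This keeps $G'$ connected and all of its cycles separating, and it terminates either at a graph with all degrees in $\Setx{2,3}$ (a genuine 2DC instance) or at a trivial base case. Applying the 2DC and restoring the recorded bridges to the forest yields a decomposition of $G'$ into a spanning tree and a matching; combined with $R$ this is the desired decomposition of $G$ into a spanning tree, a nonempty $2$-regular subgraph, and a matching.

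For the reverse implication $\text{3DC}\Rightarrow\text{2DC}$ I would start from a connected graph $G$ with all degrees in $\Setx{2,3}$ in which every cycle is separating, and complete it to a connected \emph{cubic} graph $H$ by raising each $2$-vertex to degree $3$ --- for instance by taking two disjoint copies of $G$ and joining corresponding $2$-vertices by a perfect matching, which stays connected and becomes cubic. Here $V_2(G)\neq\emptyset$, for otherwise $G$ would itself be a connected cubic graph and would therefore contain a non-separating cycle, contradicting the hypothesis. Applying the 3DC to $H$ produces a spanning tree, a $2$-regular subgraph, and a matching of $H$, and the aim is to descend this to a decomposition of a single copy of $G$ into a spanning tree and a matching only.

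The hard part will be exactly this descent. Restricting the cubic decomposition to one copy of $G$ yields a spanning forest, a $2$-regular subgraph, and a matching of $G$, and the obstacle is to \emph{eliminate} the $2$-regular subgraph: its edges must be redistributed into the forest and the matching while the forest stays acyclic and spanning. This is precisely where the hypothesis that every cycle of $G$ is separating becomes essential, since it constrains how the cycles of the $2$-regular subgraph can sit inside $G$ and should allow their edges to be re-routed; making this relocation rigorous --- and arranging the completion $H$ so that the descent is forced rather than merely available --- is the crux of the argument. Finally, nonemptiness of the matching in the conclusion is automatic: a connected graph with minimum degree at least $2$ contains a cycle, so the matching part of any such decomposition of $G$ is nonempty.
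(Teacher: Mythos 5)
Your first implication (2DC $\Rightarrow$ 3DC) matches the paper's, which simply reuses the cycle-removal argument of Corollary~\ref{imply}; your extra step of trimming the $1$-vertices created by deleting the cycles (so that the residual graph really is a 2DC instance) is a legitimate detail, and the nonemptiness of the cycle collection does not need any external fact: if $k=0$ then $G$ itself would satisfy the 2DC hypotheses, and a cubic graph cannot be a spanning tree plus a matching by a simple edge count.

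The genuine gap is in the direction 3DC $\Rightarrow$ 2DC, and you have located it yourself: the ``descent'' is not a technicality to be filled in later, it is the whole proof, and your chosen cubic completion cannot support it. If you join two copies of $G$ by a matching on corresponding $2$-vertices, a cycle $C$ lying in one copy and separating there can easily be non-separating in $H$, because the second copy together with the matching edges reconnects the pieces of $G-E(C)$; so nothing prevents the $2$-regular subgraph of a 3-decomposition of $H$ from using edges of $G$, and since the 3DC only guarantees \emph{some} decomposition, you cannot force it off $G$. The paper avoids this by attaching to each $2$-vertex of $G$ a pendant gadget $X$ (a $\Theta$-graph with one edge subdivided once, so $X$ has a unique $2$-vertex) via a new edge joining the two $2$-vertices. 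These new edges are bridges, so every cycle of the resulting cubic graph $H$ either is a cycle of $G$ (still separating in $H$, since the gadgets hang off single vertices and add no connectivity between the pieces of $G-E(C)$) or lies inside a gadget; as $H$ minus the $2$-regular part contains a spanning tree, every cycle of the $2$-regular part is non-separating and hence lies in a gadget. Thus all edges of $G$ land in the tree or the matching, and restricting to $G$ gives the required 2-decomposition. Your construction would need an analogous mechanism forcing every non-separating cycle of $H$ outside of $G$, which a matching between two copies does not provide.
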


\begin{proof}
  The proof of Corollary~\ref{imply}, which applies for an arbitrary
  (not necessarily plane) connected subcubic graph, effectively shows that the 2DC
  implies the 3DC. Therefore, it suffices to prove the converse direction. 

  Let $H$ be a connected graph such that every cycle of $H$ is
  separating and each vertex of $H$ has degree $2$ or $3$. Let $X$
  denote the graph resulting from the graph $\Theta$ by subdividing
  one edge of $\Theta$ precisely once, i.e. $|V_2(X)|=1$. We construct
  from $H$ a cubic graph $G$ by adding $|V_2(H)|$ many copies of $X$
  to $H$ and by connecting each $2$-vertex of $H$ by an edge with a
  $2$-vertex of a copy of $X$. By the 3DC, there is a
  $3$-decomposition of $G$. The edges connecting $H$ to copies of $X$
  are obviously bridges of $G$ and are thus contained in the tree
  part, say $T$, of the $3$-decomposition. Since every cycle of $H$ is
  separating, every cycle of $G$ which is not separating is contained
  in some copy of $X$. Hence, we obtain a $2$-decomposition of $H$ in
  which $T \cap H$ is the tree part and the matching part consists of
  the remaining edges of $H$.
\end{proof}

%\begin{cor}\label{imply2}
% Every connected cubic plane graph can be decomposed into a
 % spanning tree, a nonempty $2$-regular subgraph and a matching.
 %\end{cor}

%%%%%%%%%%%%%%%%%%%%%%%%%%%%%%%%%%%%%%%%%%%%%%%%%%%%%%%%%%%%%%%%%%%%%%

\section*{Acknowledgments}
We thank Adam Kabela for interesting discussions of the
3-Decomposition Conjecture. Part of the work on this paper was done
during the ``8th Workshop on the Matthews-Sumner Conjecture and
Related Problems'' in Pilsen. The first and the third author
appreciate the hospitality of the organizers of the workshop.

%%%%%%%%%%%%%%%%%%%%%%%%%%%%%%%%%%%%%%%%%%%%%%%%%%%%%%%%%%%%%%%%%%%%%%%%%%%%%
%%%%%%%%%%%%%%%%%%%%%%%%%%%%%%%%%%%%%%%%%%%%%%%%%%%%%%%%%%%%%%%%%%%%%%%%%%%%%

\end{document}